\newcommand{\RED}[1]{{\color{red}#1}} 
 \renewcommand{\RED}[1]{{#1}} 
\newcommand{\BLU}[1]{{\color{blue}#1}} 
 \renewcommand{\BLU}[1]{{#1}} 
\numberwithin{equation}{section}
\newtheorem{theorem}{Theorem}[section]
\newtheorem{proposition}[theorem]{Proposition}
\newtheorem{corollary}[theorem]{Corollary}
\newtheorem{lemma}[theorem]{Lemma}
\newtheorem{remark}{Remark}[section]
\newtheorem{example}{Example}[section]
\newcommand{\memo}[1]{{\bf \small \RED{[MEMO:}} \BLU{\small #1} \ {\bf \small \RED{:end]} }}  
   \renewcommand{\memo}[1]{}           
\newcommand{\OMIT}[1]{{\bf [OMIT:} #1 \ {\bf --- end OMIT] }}  
   \renewcommand{\OMIT}[1]{}            
\newcommand{\iffS}{\ \Leftrightarrow\ }
\newcommand{\RR}{{\mathbb{R}}}
\newcommand{\ZZ}{{\mathbb{Z}}}
\newcommand{\unitvec}[1]{\bm{1}\sp{#1}}
\newcommand{\Lnat}{{L$^{\natural}$}}
\newcommand{\Mnat}{{M$^{\natural}$}}
\newcommand{\finbox}{\hspace*{\fill}$\rule{0.2cm}{0.2cm}$}
\newcommand{\todaye}{\the\year/\the\month/\the\day}
\begin{document}

\title{
Shapley--Folkman-type Theorem
\\
for Integrally Convex Sets
}

\author{
Kazuo Murota%
\thanks{
The Institute of Statistical Mathematics,
Tokyo 190-8562, Japan; 
and
Faculty of Economics and Business Administration,
Tokyo Metropolitan University, 
Tokyo 192-0397, Japan,
murota@tmu.ac.jp}
\ and 
Akihisa Tamura%
\thanks{Department of Mathematics, Keio University, 
Yokohama 223-8522, Japan,
aki-tamura@math.keio.ac.jp}
}

\date{May 2023 / August 2024}

\maketitle

\begin{abstract}
The Shapley--Folkman theorem is a statement  
about the Minkowski sum of (non-convex) sets,
expressing the closeness of the Minkowski sum 
to convexity in a quantitative manner.
This paper establishes similar theorems
for integrally convex sets, \Mnat-convex sets, and \Lnat-convex sets,
which are major classes of discrete convex sets in discrete convex analysis.
\end{abstract}

{\bf Keywords}:
Discrete convex analysis,  
Integrally convex set,
\Lnat-convex set,
\Mnat-convex set,
Minkowski sum,
Shapley--Folkman theorem

\tableofcontents

\newpage



\section{Introduction}
\label{SCintro}

The Shapley--Folkman theorem
is concerned with the Minkowski sum of (non-convex) sets
and expresses the closeness of the Minkowski sum 
to convexity in a quantitative manner.
The theorem was first discovered in the literature of economics
(Arrow--Hahn \cite{AH71}, 
Starr \cite{Sta69,Sta08})
and found applications also in optimization
(Aubin--Ekeland \cite{AE76},
Ekeland--T{\'e}mam \cite{ET99},
Bertsekas \cite{Ber09,Ber16})
and other fields of mathematics
(Fradelizi--Madiman--Marsiglietti--Zvavitch \cite{FMMZ18}).

To describe the Shapley--Folkman theorem 
we need to introduce some terminology and notation.
The {\em Minkowski sum} (or {\em vector sum}) 
of sets $S_{1}$, $S_{2} \subseteq \RR\sp{n}$ 
means the subset of $\RR\sp{n}$ 
defined by
\begin{equation} \label{minkowsumGdef}
S_{1}+S_{2} = \{ x + y \mid x \in S_{1}, \  y \in S_{2} \} .
\end{equation}
This operation can natually be extended to the
Minkowski sum 
$\sum_{i=1}\sp{m} S_{i} = S_{1}+ S_{2} + \cdots + S_{m}$
of an arbitrary number of sets
$S_{i} \subseteq \RR\sp{n}$ $(i=1,2,\ldots,m)$. 
The Minkowski sum of convex sets is again convex.
For any subset $S$ of $\RR\sp{n}$, we denote its {\em convex hull} by 
$\overline{S}$, which is, by definition, 
the smallest convex set containing $S$.
As is well known, $\overline{S}$ coincides with the set of 
all convex combinations of (finitely many) elements of $S$.
It is known that 
$\overline{S_{1}+ S_{2} + \cdots + S_{m}}
 = \overline{S_{1}} + \overline{S_{2}} + \cdots + \overline{S_{m}}$.

For any set $S$ $(\subseteq \RR\sp{n})$,
the {\em radius} ${\rm rad}(S)$
and the {\em inner radius} $r(S)$
are defined by  
\begin{align}
{\rm rad}(S) &= \inf_{x \in \RR\sp{n}} \sup_{y \in S} \| x - y \|_{2},
\label{SFraddef} \\
r(S) &= \sup_{x \in \overline{S}} 
\inf_{T}\{ {\rm rad}(T) \mid T \subseteq S, x \in \overline{T} \}.
\label{SFinnerraddef} 
\end{align}
The inner radius $r(S)$ expresses the
size of holes or dents in $S$, and 
we have $r(S)=0$ for a convex set $S$.

The following theorem 
\cite[Theorem B.10]{AH71}
expresses the closeness of the Minkowski sum 
of (non-convex) sets to convexity in a quantitative manner.
This theorem is often referred to as the 
{Shapley--Folkman--Starr theorem},
as it was derived by Starr \cite{Sta69}
from the Shapley--Folkman theorem \cite[Theorem B.9]{AH71}
as a (non-trivial) corollary.

\begin{theorem}[Shapley--Folkman--Starr]  \label{THshfostarr}
Let $S_{i}$ $(i=1,2,\ldots,m)$ be compact subsets of $\RR\sp{n}$
such that $r(S_{i}) \leq L$ for $i=1,2,\ldots,m$ for some $L \in \RR$.
Let $W = S_{1} + S_{2} + \cdots + S_{m}$.
For any $x \in \overline{W}$,
there exists $z \in W$ that satisfies 
$\| x - z \|_{2} \leq L \sqrt{\min (n,m)}$.
\finbox
\end{theorem}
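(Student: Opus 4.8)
The plan is to reduce the assertion to the classical Shapley--Folkman lemma and then apply a second-moment (averaging) argument in the spirit of Starr. Using the identity $\overline{W}=\overline{S_{1}}+\cdots+\overline{S_{m}}$ recalled above, I would first write any $x\in\overline{W}$ as $x=\sum_{i=1}^{m}x_{i}$ with $x_{i}\in\overline{S_{i}}$, and then invoke the Shapley--Folkman lemma to arrange that $x_{i}\in S_{i}$ for all but at most $n$ of the indices $i$. If that lemma is not available to cite, I would prove it by a Carath\'eodory argument: write each $x_{i}=\sum_{j}\lambda_{ij}s_{ij}$ as a finite convex combination of points $s_{ij}\in S_{i}$, form the vectors $(s_{ij},\unitvec{i})\in\RR^{n+m}$ where $\unitvec{i}$ is the $i$-th unit vector of $\RR^{m}$, note that $\sum_{i,j}\lambda_{ij}(s_{ij},\unitvec{i})=(x,\vecone)$, and reduce this nonnegative representation of $(x,\vecone)$ to at most $n+m$ positive coefficients by Carath\'eodory; since the last $m$ coordinates force at least one positive $\lambda_{ij}$ per index $i$, at most $n$ indices retain two or more positive coefficients, and for the others the (reduced) $x_{i}$ equals a single $s_{ij}\in S_{i}$. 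Relabelling, the set $B$ of ``bad'' indices with $x_{i}\notin S_{i}$ satisfies $|B|\le n$, and trivially $|B|\le m$, hence $|B|\le\min(n,m)$.

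Next, for $i\in B$ I would use the inner radius. Fix $\varepsilon>0$; since $x_{i}\in\overline{S_{i}}$, the definition \eqref{SFinnerraddef} together with $r(S_{i})\le L$ gives a finite set $T_{i}\subseteq S_{i}$ with $x_{i}\in\overline{T_{i}}$ and ${\rm rad}(T_{i})\le L+\varepsilon$, hence a point $c_{i}$ with $\|t-c_{i}\|_{2}\le L+\varepsilon$ for every $t\in T_{i}$. Write $x_{i}=\sum_{j}\mu_{ij}t_{ij}$ with $t_{ij}\in T_{i}$, $\mu_{ij}\ge0$, $\sum_{j}\mu_{ij}=1$. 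Now take independent random vectors $Z_{i}$ $(i\in B)$ with $Z_{i}=t_{ij}$ with probability $\mu_{ij}$, put $z=\sum_{i\in B}Z_{i}+\sum_{i\notin B}x_{i}$, and note that every realisation of $z$ lies in $W$ (it is a sum of one point from each $S_{i}$), while $\mathbb{E}[z]=x$. Since the $Z_{i}$ are independent and centred at the $x_{i}$, the cross terms vanish and
\begin{align*}
\mathbb{E}\bigl[\|z-x\|_{2}^{2}\bigr]
 &=\sum_{i\in B}\mathbb{E}\bigl[\|Z_{i}-x_{i}\|_{2}^{2}\bigr]
  =\sum_{i\in B}\Bigl(\mathbb{E}\bigl[\|Z_{i}-c_{i}\|_{2}^{2}\bigr]-\|x_{i}-c_{i}\|_{2}^{2}\Bigr) \\
 &\le\sum_{i\in B}(L+\varepsilon)^{2}\ \le\ \min(n,m)\,(L+\varepsilon)^{2},
\end{align*}
because $\mathbb{E}[\|Z_{i}-c_{i}\|_{2}^{2}]=\sum_{j}\mu_{ij}\|t_{ij}-c_{i}\|_{2}^{2}\le(L+\varepsilon)^{2}$. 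Consequently some realisation gives $z_{\varepsilon}\in W$ with $\|z_{\varepsilon}-x\|_{2}\le\sqrt{\min(n,m)}\,(L+\varepsilon)$. Letting $\varepsilon\downarrow0$ and using that $W$ is compact (so $z\mapsto\|x-z\|_{2}$ attains its minimum on $W$) yields the required $z\in W$ with $\|x-z\|_{2}\le L\sqrt{\min(n,m)}$.

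The step I expect to be the real obstacle is getting the factor $\sqrt{\min(n,m)}$ rather than $\min(n,m)$. Once each bad coordinate is controlled by $\|x_{i}-z_{i}\|_{2}\le L$ (which already follows from a one-coordinate argument), the triangle inequality only gives the linear bound $\min(n,m)\,L$. The gain comes precisely from choosing all the $z_{i}$ \emph{simultaneously and at random} and bounding the second moment of the aggregate error $\sum_{i\in B}(Z_{i}-x_{i})$: independence annihilates the cross terms, and each coordinate contributes variance at most $(L+\varepsilon)^{2}$ because $T_{i}$ lies in a ball of radius $L+\varepsilon$. The remaining points are routine bookkeeping: the $\varepsilon$ is there because the infimum in \eqref{SFinnerraddef} need not be attained, and compactness of $W$ is what lets one pass to the limit $\varepsilon\downarrow0$.
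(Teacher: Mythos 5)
Your proof is correct. Note that the paper itself offers no proof of this theorem: it is stated as a known result and cited from Arrow--Hahn \cite{AH71}, so there is no internal argument to compare against. What you have written is essentially the classical route, in its cleanest modern form: the Carath\'eodory lifting to $\RR^{n+m}$ is the standard algebraic proof of the Shapley--Folkman lemma (as in \cite[Proposition 5.7.1]{Ber09} and \cite[Lemma 2.3]{FMMZ18}, both cited in the paper), and the second-moment randomization over the bad indices is Cassels' variance proof of the Starr refinement; your identity $\mathbb{E}\|Z_i-x_i\|_2^2=\mathbb{E}\|Z_i-c_i\|_2^2-\|x_i-c_i\|_2^2$ is exactly the bias--variance decomposition that converts the radius bound into the $\sqrt{\min(n,m)}$ factor. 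Two small points deserve one more line each: (i) the set $T_i$ with $x_i\in\overline{T_i}$ and ${\rm rad}(T_i)\le L+\varepsilon$ may a priori be infinite, but Carath\'eodory lets you replace it by the finitely many points appearing in a convex representation of $x_i$, which can only decrease the radius; and (ii) the center $c_i$ realizing $\sup_{t\in T_i}\|t-c_i\|_2\le L+\varepsilon$ exists because the infimum in \eqref{SFraddef} is attained for a bounded set (or simply settle for $L+2\varepsilon$, which is harmless in the limit). With those remarks the argument is complete; the final passage $\varepsilon\downarrow 0$ via compactness of $W$ is handled correctly.
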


A key fact used in the proof of Theorem~\ref{THshfostarr}
is the following theorem,
which formulates a phenomenon in the Minkowski summation
that may be compared to Carath{\' e}odory's theorem
for convex combinations.

\begin{theorem}[Shapley--Folkman]  \label{THshapfolkG0}
Let $S_{i} \subseteq \RR\sp{n}$  $(i=1,2,\ldots,m)$,
and $W = S_{1} + S_{2} + \cdots + S_{m}$.
For any $x \in \overline{W}$,
there exists a subset $I$ of the index set $\{ 1,2,\ldots,m \}$
such that $|I| \leq \min(n,m)$ and
$x \in \overline{\sum_{i \in I} S_{i}} + \sum_{j \in J} S_{j}$,
where $J = \{ 1,2,\ldots,m \} \setminus I$.
\finbox
\end{theorem}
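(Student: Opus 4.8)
The plan is to reduce the statement to Carath\'{e}odory's theorem applied in a lifted space of dimension $n+m-1$, followed by a short counting argument on supports. First I would use the identity $\overline{W}=\overline{S_{1}}+\cdots+\overline{S_{m}}$ recalled above to write $x=x_{1}+\cdots+x_{m}$ with $x_{i}\in\overline{S_{i}}$, and then expand each $x_{i}$ as a finite convex combination $x_{i}=\sum_{k}\lambda_{ik}s_{ik}$ with $s_{ik}\in S_{i}$, $\lambda_{ik}>0$, and $\sum_{k}\lambda_{ik}=1$. To each selected point I attach the $i$-th unit vector $u_{i}$ of $\RR^{m}$, forming the lifted point $\hat{s}_{ik}=(s_{ik},u_{i})\in\RR^{n+m}$. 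All of these lifted points lie on the affine hyperplane $H=\{(y,z)\in\RR^{n+m}\mid z_{1}+\cdots+z_{m}=1\}$, which has dimension $n+m-1$; moreover $\hat{x}:=\frac{1}{m}(x,\vecone)$ also lies on $H$, and it is the convex combination $\hat{x}=\sum_{i,k}\frac{\lambda_{ik}}{m}\,\hat{s}_{ik}$ (for each $i$ the coefficients $\frac{\lambda_{ik}}{m}$ sum over $k$ to $\frac{1}{m}$, so their grand total is $1$).

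Next I would apply Carath\'{e}odory's theorem inside $H$: since $\hat{x}$ lies in the convex hull of the $\hat{s}_{ik}$, which span an affine subspace of dimension at most $n+m-1$, there is a subset $P$ of the index pairs $(i,k)$ with $|P|\le(n+m-1)+1=n+m$ and coefficients $\mu_{ik}>0$ summing to $1$ such that $\hat{x}=\sum_{(i,k)\in P}\mu_{ik}\hat{s}_{ik}$. Reading off the last $m$ coordinates gives $\sum_{k:(i,k)\in P}\mu_{ik}=\frac{1}{m}$ for every $i$, so each index $i\in\{1,\dots,m\}$ occurs at least once in $P$. Writing $n_{i}\ge1$ for the number of occurrences of $i$, we have $\sum_{i}n_{i}=|P|\le n+m$, hence $\sum_{i}(n_{i}-1)\le n$. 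Setting $I=\{\,i\mid n_{i}\ge2\,\}$, we get $|I|\le\sum_{i}(n_{i}-1)\le n$, and trivially $|I|\le m$, so $|I|\le\min(n,m)$.

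It then remains to read off the first $n$ coordinates, which give $\frac{1}{m}x=\sum_{(i,k)\in P}\mu_{ik}s_{ik}$. For $j\notin I$ there is a unique $k_{j}$ with $(j,k_{j})\in P$, and the last-coordinate equation forces $\mu_{jk_{j}}=\frac{1}{m}$, so after multiplying through by $m$ this index contributes $s_{jk_{j}}\in S_{j}$. For $i\in I$ the numbers $m\mu_{ik}$ over $k$ with $(i,k)\in P$ are nonnegative and sum to $1$, so $\sum_{k}m\mu_{ik}s_{ik}\in\overline{S_{i}}$. Multiplying the displayed identity by $m$ therefore gives $x\in\sum_{i\in I}\overline{S_{i}}+\sum_{j\in J}S_{j}$, and since $\sum_{i\in I}\overline{S_{i}}=\overline{\sum_{i\in I}S_{i}}$, this is exactly the asserted conclusion.

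The one delicate point I anticipate is the bookkeeping in the middle step: one must verify that the lifted points genuinely lie in a flat of dimension at most $n+m-1$ (this is what makes Carath\'{e}odory yield $n+m$ and not $n+m+1$ points, hence the bound $\min(n,m)$ rather than $\min(n+1,m)$), and one must discard any zero coefficients before counting so that ``every index occurs in $P$'' is legitimate. Tracking the normalization by $\frac{1}{m}$ consistently across the two coordinate blocks also requires a little care; beyond that the argument is routine, and the case $m\le n$ is in any event trivial, taking $I=\{1,\dots,m\}$.
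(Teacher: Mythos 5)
Your proof is correct and complete: the lifting to $\RR^{n+m}$, the application of Carath\'eodory's theorem within the affine hyperplane $\{z_{1}+\cdots+z_{m}=1\}$ of dimension $n+m-1$, and the support-counting argument $\sum_{i}(n_{i}-1)\le n$ are all carried out accurately, including the normalization by $1/m$ and the observation that every index must appear with positive weight. The paper itself does not prove Theorem~\ref{THshapfolkG0} but cites exactly this algebraic Carath\'eodory-based argument (Bertsekas, Fradelizi et al.), so your proposal coincides with the intended proof.
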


Theorem~\ref{THshapfolkG0} is
ascribed to Shapley and Folkman in \cite[Theorem B.8]{AH71},
and is often referred to as the Shapley--Folkman lemma.
Although the statement of \cite[Theorem B.8]{AH71}
involves an assumption of compactness of each $S_{i}$,
it is possible to avoid this assumption 
by using an algebraic proof based on Carath{\' e}odory's theorem
(Bertsekas \cite[Proposition 5.7.1]{Ber09}, 
Fradelizi--Madiman--Marsiglietti--Zvavitch \cite[Lemma 2.3]{FMMZ18}).
Alternative proofs of Theorem~\ref{THshapfolkG0} 
can be found in Ekeland--T{\'e}mam \cite[Appendix I]{ET99} 
(without  the compactness assumption)
and Howe \cite{How12} (under the compactness assumption).

\medskip

The objective of this paper is to 
establish theorems similar to Theorem~\ref{THshfostarr}
in the context of discrete convex analysis 
\cite{Fuj05book,Mdca98,Mdcasiam,Mdcaeco16}.
Section~\ref{SCprelimDCA} is devoted to the preliminaries 
from discrete convex analysis,
and the main results are described in Section~\ref{SCresult}.
Theorems \ref{THshapfolkLinfIC} and \ref{THshapfolkL2IC} 
give two variants of the Shapley--Folkman-type theorem
for integrally convex sets,
Theorem~\ref{THshapfolkM} deals with \Mnat-convex sets,
and Theorem~\ref{THshapfolkL}  with \Lnat-convex sets
(where $\natural$ is read ``natural'').
The proofs are given in Section~\ref{SCproof},
where Theorems \ref{THshfostarr} and \ref{THshapfolkG0} are used.

\section{Preliminaries from Discrete Convex Analysis}
\label{SCprelimDCA}

\subsection{Integrally convex sets}
\label{SCintconvsetdef}

Integral convexity is a fundamental concept 
in discrete convex analysis,
introduced by Favati--Tardella \cite{FT90} for functions
defined on the integer lattice $\ZZ\sp{n}$.  
In this paper we use the concept of
integrally convex sets, as formulated in 
\cite[Section 3.4]{Mdcasiam} as a special case of 
integrally convex functions.
The reader is referred to \cite{MT23ICsurv}
for a recent comprehensive survey on integral convexity.

For $x \in \RR^{n}$ the {\em integral neighborhood} of $x$ is defined by
\begin{equation} \label{Nxdef}
{\rm N}(x) = \{ z \in \mathbb{Z}^{n} \mid | x_{i} - z_{i} | < 1 \ (i=1,2,\ldots,n)  \} .
\end{equation}
It is noted that 
strict inequality ``\,$<$\,'' is used in this definition
and ${\rm N}(x)$ admits an alternative expression
\begin{equation}  \label{intneighbordeffloorceil}
{\rm N}(x) = \{ z \in \ZZ\sp{n} \mid
\lfloor x_{i} \rfloor \leq  z_{i} \leq \lceil x_{i} \rceil  \ \ (i=1,2,\ldots, n) \} ,
\end{equation}
where, for $t \in \RR$ in general, 
$\left\lfloor  t  \right\rfloor$
denotes the largest integer not larger than $t$
(rounding-down to the nearest integer) and 
$\left\lceil  t   \right\rceil$ 
is the smallest integer not smaller than $t$
(rounding-up to the nearest integer).
That is,
${\rm N}(x)$ consists of all integer vectors $z$ 
between
$\left\lfloor x \right\rfloor 
=( \left\lfloor x_{1} \right\rfloor ,\left\lfloor x_{2} \right\rfloor , 
  \ldots, \left\lfloor x_{n} \right\rfloor)$ 
and 
$\left\lceil x \right\rceil 
= ( \left\lceil x_{1} \right\rceil, \left\lceil x_{2} \right\rceil,
   \ldots, \left\lceil x_{n} \right\rceil)$.

For a set $S \subseteq \ZZ^{n}$
and $x \in \RR^{n}$
we call the convex hull of $S \cap {\rm N}(x)$ 
the {\em local convex hull} of $S$ around $x$.
A nonempty set
$S \subseteq \ZZ^{n}$ is said to be 
{\em integrally convex} if
the union of the local convex hulls 
$\overline{S \cap {\rm N}(x)}$ over $x \in \RR^{n}$ 
is convex.
In other words, a set $S \subseteq \ZZ^{n}$ is called 
integrally convex if
\begin{equation}  \label{icsetdef0}
 \overline{S} = \bigcup_{x \in \RR\sp{n}} \overline{S \cap {\rm N}(x)}.
\end{equation}
This condition is equivalent to saying that
every point $x$
in the convex hull of $S$ is contained in the convex hull of 
$S \cap {\rm N}(x)$, i.e.,
\begin{equation}  \label{icsetdef1}
 x \in \overline{S} \ \Longrightarrow  x \in  \overline{S \cap {\rm N}(x)} .
\end{equation}
Obviously, every subset of $\{ 0, 1\}\sp{n}$ is integrally convex.

We say that a set $S \subseteq \ZZ\sp{n}$ is
{\em hole-free} if
\begin{equation}  \label{icsetholefree}
  S = \overline{S} \cap \ZZ\sp{n} .
\end{equation}
It is known that an integrally convex set is hole-free;
see \cite[Proposition~2.2]{MT23ICsurv} for a formal proof.

\subsection{Minkowski sum in discrete convex analysis}

Minkowski summation is an intriguing operation in discrete setting.
The naive looking relation
\begin{equation} \label{convminkowsumG}
   S_{1}+S_{2}  = ( \overline{S_{1}+ S_{2}}) \cap \ZZ\sp{n} 
\end{equation}
is not always true,
as Example~\ref{EXicdim2sumhole} below shows.
It may be said that 
if \eqref{convminkowsumG} is true
for some class of discrete convex sets, 
this equality captures a certain essence of the discrete convexity in question.

\begin{example}[{\cite[Example 3.15]{Mdcasiam}}] \rm \label{EXicdim2sumhole}
The Minkowski sum of
$S_{1} = \{ (0,0), (1,1) \}$
and
$S_{2} = \{ (1,0), (0,1) \}$
is equal to 
$S_{1}+S_{2} = \{ (1,0), (0,1), (2,1), (1,2) \}$,
for which
$(1,1) \in (\overline{S_{1}+S_{2}}) \setminus (S_{1}+S_{2})$.
That is, the Minkowski sum $S_{1}+S_{2}$ has a `hole' at $(1,1)$.
See Figure~\ref{FGminkowhole}.
\finbox
\end{example}

\begin{figure}
\centering
 \includegraphics[height=30mm]{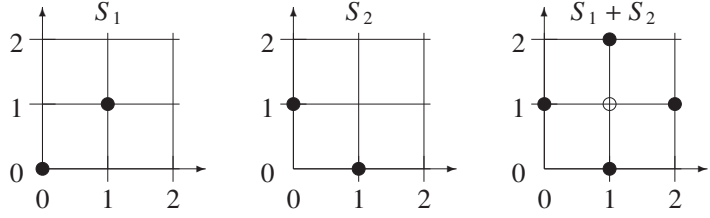}
\caption{Minkowski sum of discrete sets}
\label{FGminkowhole}
\end{figure}

In Example~\ref{EXicdim2sumhole} above,
both $S_{1}$ and $S_{2}$ are integrally convex.
This shows that \eqref{convminkowsumG} 
is not guaranteed for integrally convex sets
and that the Minkowski sum of integrally convex sets
is not necessarily integrally convex.

A subclass of integrally convex sets,
called \Mnat-convex sets,
is well-behaved with respect to Minkowski summation.
A (nonempty) set $S \subseteq \ZZ\sp{n}$
is called {\em \Mnat-convex}
if it enjoys the following exchange property:
\begin{quote}
For any $x, y \in S$ and 
$i \in \{ 1,2,\ldots, n \}$ with $x_{i} > y_{i}$, 
we have
\\
(i)
$x -\unitvec{i} \in S$, $y+\unitvec{i} \in S$
\ or \  
\\
(ii) there exists some $j \in \{ 1,2,\ldots, n \}$ such that
$x_{j} < y_{j}$,
$x-\unitvec{i}+\unitvec{j}  \in S$, and $y+\unitvec{i}-\unitvec{j} \in S$,
\end{quote}
where $\unitvec{i}$ denotes the $i$th unit vector
for $i=1,2,\ldots, n$.
It is known that the Minkowski sum of \Mnat-convex sets
is \Mnat-convex
(\cite[Theorem 6.15]{Mdcasiam},
\cite[Theorem 3.13]{Msurvop21}).
The following theorem states this fact.

\begin{theorem}  \label{THminkowM}
The Minkowski sum 
$W = S_{1} + S_{2} + \cdots + S_{m}$
of \Mnat-convex sets 
$S_{i} \subseteq \ZZ\sp{n}$ $(i=1,2,\ldots,m)$
is an \Mnat-convex set. 
\finbox
\end{theorem}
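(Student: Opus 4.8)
By induction on $m$ (the case $m=1$ being trivial), it suffices to prove that the Minkowski sum $W=S_{1}+S_{2}$ of two \Mnat-convex sets $S_{1},S_{2}\subseteq\ZZ\sp{n}$ is \Mnat-convex, since then $S_{1}+\cdots+S_{m}=(S_{1}+\cdots+S_{m-1})+S_{m}$ is again a sum of two such sets. So the plan is to verify the \Mnat-convex exchange property for $W=S_{1}+S_{2}$. Fix $x,y\in W$, choose decompositions $x=a+b$ and $y=c+d$ with $a,c\in S_{1}$ and $b,d\in S_{2}$, and fix an index $i$ with $x_{i}>y_{i}$. Since $(a_{i}-c_{i})+(b_{i}-d_{i})=x_{i}-y_{i}>0$, at least one of $a_{i}>c_{i}$ and $b_{i}>d_{i}$ holds; by the symmetry between $S_{1}$ and $S_{2}$ we may assume $a_{i}>c_{i}$.

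I would then run an alternating exchange argument. Apply the exchange property to $a,c\in S_{1}$ at the index $i$. If its alternative (i) holds, then $x-\unitvec{i}=(a-\unitvec{i})+b\in W$ and $y+\unitvec{i}=(c+\unitvec{i})+d\in W$, which is alternative (i) for $W$. If its alternative (ii) holds, we obtain $j$ with $a_{j}<c_{j}$, $a-\unitvec{i}+\unitvec{j}\in S_{1}$ and $c+\unitvec{i}-\unitvec{j}\in S_{1}$, hence $x-\unitvec{i}+\unitvec{j}\in W$ and $y+\unitvec{i}-\unitvec{j}\in W$; if in addition $b_{j}\le d_{j}$, then $x_{j}=a_{j}+b_{j}<c_{j}+d_{j}=y_{j}$ and this is alternative (ii) for $W$. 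The remaining possibility is $b_{j}>d_{j}$, and then I apply the exchange property to $b,d\in S_{2}$ at the index $j$: its alternative (i) gives $x-\unitvec{i}=(a-\unitvec{i}+\unitvec{j})+(b-\unitvec{j})\in W$ and $y+\unitvec{i}=(c+\unitvec{i}-\unitvec{j})+(d+\unitvec{j})\in W$, i.e.\ alternative (i) for $W$, while its alternative (ii) yields $k$ with $b_{k}<d_{k}$, so that $x-\unitvec{i}+\unitvec{k}=(a-\unitvec{i}+\unitvec{j})+(b-\unitvec{j}+\unitvec{k})\in W$ and $y+\unitvec{i}-\unitvec{k}=(c+\unitvec{i}-\unitvec{j})+(d+\unitvec{j}-\unitvec{k})\in W$; if $a_{k}\le c_{k}$ then $x_{k}<y_{k}$ and we are done, whereas if $a_{k}>c_{k}$ we are back in a configuration of exactly the same shape, with $k$ in place of $i$ and the decompositions updated, so the chase continues.

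The one genuine difficulty is to prove that this alternating chase must terminate (and to dispose of the degenerate coincidences among the indices $i,j,k,\dots$ that it produces --- notably, when $k=i$ the $S_{1}$- and $S_{2}$-exchanges recombine into a genuinely new decomposition of $x$ and $y$). I would handle this by fixing at the outset decompositions $x=a+b$, $y=c+d$, together with a continuation of the chase, that minimize a potential such as $\|a-c\|_{1}+\|b-d\|_{1}$ (refined lexicographically if necessary), and then checking that entering the ``bad'' branch strictly decreases the potential, contradicting minimality; for instance in the case $k=i$ one has $a_{i}-c_{i}\ge 2$ (from $b_{i}<d_{i}$ and $x_{i}-y_{i}\ge 1$), whence the recombined decomposition has strictly smaller potential. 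This termination bookkeeping is the crux; everything else is the routine translation between membership in $S_{1}$ or $S_{2}$ and membership in $W$ carried out above. A shorter alternative route is to pass to polyhedra: the convex hull of an \Mnat-convex set is an integral generalized polymatroid, such polyhedra are closed under Minkowski sum, and every integer point of a Minkowski sum of two of them decomposes as a sum of integer points of the summands; since an \Mnat-convex set is precisely the set of integer points of its (polyhedral) convex hull --- it is hole-free, being integrally convex --- the set $\overline{S_{1}+S_{2}}=\overline{S_{1}}+\overline{S_{2}}$ is an integral generalized polymatroid whose integer point set equals $(\overline{S_{1}}\cap\ZZ\sp{n})+(\overline{S_{2}}\cap\ZZ\sp{n})=S_{1}+S_{2}$, so $S_{1}+S_{2}$ is \Mnat-convex.
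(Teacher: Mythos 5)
The paper does not actually prove Theorem~\ref{THminkowM}; it quotes it as a known result, citing \cite[Theorem 6.15]{Mdcasiam} and \cite[Theorem 3.13]{Msurvop21}, so there is no in-paper argument to compare against. Judged on its own terms, your proposal contains one incomplete argument and one essentially correct one. The reduction to $m=2$ and the case analysis of the alternating exchange chase are fine as far as they go, but the chase is not yet a proof: you correctly identify termination as the crux and then only gesture at it. The difficulty is real --- the chase $i\to j\to k\to\cdots$ can keep producing fresh indices with $a_k>c_k$, and the claim that choosing decompositions minimizing $\|a-c\|_1+\|b-d\|_1$ forces termination has to be verified for every way an index can recur (not only $k=i$, which you do handle correctly: there the potential drops by at least $2$), including checking that the intermediate exchange steps never increase the potential of whatever decomposition is being tracked. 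Without that bookkeeping the first route is a plan, not a proof. The polyhedral route, by contrast, is sound and is essentially how the cited literature establishes the theorem: \Mnat-convex sets are exactly the sets of integer points of integral generalized polymatroids, the Minkowski sum of integral g-polymatroids is an integral g-polymatroid, and every integer point of the sum decomposes into integer points of the summands. Be aware, however, that this last decomposition fact is precisely the content of the theorem in polyhedral disguise; invoking it is legitimate only because it has an independent polyhedral proof (total dual integrality of the inequality system describing the sum, in Frank's theory of generalized polymatroids), so your ``shorter alternative'' is short only because it imports the hard step from elsewhere. If you intend the polyhedral route, state those two imported facts explicitly with references; if you intend the exchange-axiom route, the termination argument must actually be written out.
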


\begin{corollary}  \label{COminkowMholefree}
For the Minkowski sum 
$W = S_{1} + S_{2} + \cdots + S_{m}$
of \Mnat-convex sets 
$S_{i} \subseteq \ZZ\sp{n}$ $(i=1,2,\ldots,m)$,
we have $\overline{W} \cap \ZZ\sp{n} = W$.
\end{corollary}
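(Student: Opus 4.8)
The plan is to derive Corollary~\ref{COminkowMholefree} as an immediate consequence of Theorem~\ref{THminkowM} together with the fact, recalled in Section~\ref{SCintconvsetdef}, that every \Mnat-convex set is integrally convex and that every integrally convex set is hole-free. First I would invoke Theorem~\ref{THminkowM} to conclude that the Minkowski sum $W = S_{1} + S_{2} + \cdots + S_{m}$ is itself \Mnat-convex. Since \Mnat-convexity is (by definition, as a special case of the integrally convex function concept) a strengthening of integral convexity, $W$ is in particular an integrally convex subset of $\ZZ\sp{n}$.

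Next I would apply the hole-freeness property: as noted after \eqref{icsetholefree}, an integrally convex set $S$ satisfies $S = \overline{S} \cap \ZZ\sp{n}$ (the cited reference being \cite[Proposition~2.2]{MT23ICsurv}). Applying this with $S = W$ yields $W = \overline{W} \cap \ZZ\sp{n}$, which is exactly the claimed equality $\overline{W} \cap \ZZ\sp{n} = W$. So the proof is essentially a two-line chain: \Mnat-convex sets are closed under Minkowski sum, \Mnat-convex sets are integrally convex, integrally convex sets are hole-free.

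There is really no substantial obstacle here; the content of the corollary is entirely carried by Theorem~\ref{THminkowM}, whose proof is the nontrivial input and is only cited, not reproved. The one point worth a word of care is making the logical order explicit: one cannot conclude hole-freeness of $W$ directly from hole-freeness of the individual $S_{i}$ (Minkowski sums of integrally convex sets need not even be integrally convex, as Example~\ref{EXicdim2sumhole} demonstrates), so it is essential that the \Mnat-convexity be propagated to $W$ \emph{first}, via Theorem~\ref{THminkowM}, before hole-freeness is applied. Beyond flagging that subtlety, the write-up is just the citation chain above.

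\begin{proof}
By Theorem~\ref{THminkowM}, the Minkowski sum $W = S_{1} + S_{2} + \cdots + S_{m}$ is an \Mnat-convex set, hence in particular an integrally convex subset of $\ZZ\sp{n}$. Since every integrally convex set is hole-free (cf.\ \cite[Proposition~2.2]{MT23ICsurv}), applying \eqref{icsetholefree} to $S = W$ gives $W = \overline{W} \cap \ZZ\sp{n}$, that is, $\overline{W} \cap \ZZ\sp{n} = W$.
\end{proof}
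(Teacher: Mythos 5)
Your proof is correct and follows exactly the paper's own argument: apply Theorem~\ref{THminkowM} to conclude $W$ is \Mnat-convex, note that \Mnat-convex sets are integrally convex, and invoke the hole-freeness property \eqref{icsetholefree}. Nothing to add.
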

\begin{proof}
$W$ is an \Mnat-convex set by Theorem~\ref{THminkowM}.
Any \Mnat-convex set is an integrally convex set,
for which \eqref{icsetholefree} holds.
\end{proof}

Another subclass of integrally convex sets,
called \Lnat-convex sets,
is defined as follows.
A (nonempty) set $S \subseteq \ZZ\sp{n}$ 
is called an 
{\em \Lnat-convex}
if it enjoys discrete midpoint convexity:
\begin{equation*}  
 x, y \in S
\ \Longrightarrow \
\left\lceil \frac{x+y}{2} \right\rceil ,
\left\lfloor \frac{x+y}{2} \right\rfloor  \in S .
\end{equation*}
The Minkowski sum of two \Lnat-convex sets
is not necessarily \Lnat-convex but it is integrally convex
\cite[Theorem 8.42]{Mdcasiam}.
It is also noted that the Minkowski sum of three \Lnat-convex sets
is no longer integrally convex (Example~\ref{EXminkow3lnatset}).
See \cite[Section 3.5]{Msurvop21}
for the Minkowski sum operation
for other kinds of discrete convex sets.

\section{Results}
\label{SCresult}

In this section we present our main results, 
the Shapley--Folkman-type theorems for 
integrally convex sets, \Mnat-convex sets, and \Lnat-convex sets.
To state the theorems we need to define
functions
\begin{equation} \label{ShFoStBnddef}
\alpha(n,m) =   \left( 1-\frac{1}{n} \right) \min (n,m) ,
\qquad
\beta(n,m) = \frac{1}{2} \sqrt{ n \cdot \min (n,m) } ,
\end{equation}
where $n$ is the dimension of the space and
$m$ is the number of Minkowski summands.
The proofs are given in Section~\ref{SCproof}.

\begin{theorem}  \label{THshapfolkLinfIC}
Let $S_{i} \subseteq \ZZ\sp{n}$ 
$(i=1,2,\ldots,m)$
be integrally convex sets and 
$W = S_{1} + S_{2} + \cdots + S_{m}$,
where $n \geq 2$.
For any $x \in \overline{W}$,
there exists $z \in W$ that satisfies 
$\| x - z \|_{\infty} \leq \alpha(n,m)$.
If $x \in \overline{W} \cap \ZZ\sp{n}$,
in particular, then
$\| x - z \|_{\infty} \leq \lfloor \alpha(n,m) \rfloor = \min (n,m)-1$.
\finbox
\end{theorem}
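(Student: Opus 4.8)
The plan is to combine the classical Shapley--Folkman lemma (Theorem~\ref{THshapfolkG0}) with the defining property \eqref{icsetdef1} of integral convexity, exploiting the fact that for a $0$-$1$-like ``small'' set the local convex hull lives inside a unit box. Given $x \in \overline{W}$, apply Theorem~\ref{THshapfolkG0} to the $m$ integrally convex summands: there is an index set $I$ with $|I| \le \min(n,m)$ such that $x \in \overline{\sum_{i\in I} S_i} + \sum_{j\in J} S_j$ where $J = \{1,\dots,m\}\setminus I$. Write $x = \bar u + w$ with $\bar u \in \overline{\sum_{i\in I}S_i}$ and $w \in \sum_{j\in J} S_j \subseteq \ZZ^n$. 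The set $\sum_{i\in I} S_i$ is a Minkowski sum of integrally convex sets, hence itself integrally convex is \emph{not} guaranteed (Example~\ref{EXicdim2sumhole}), but $\bar u$ still lies in $\overline{T}$ for the finite set $T = \sum_{i\in I}S_i \subseteq \ZZ^n$; by Carath\'eodory we may write $\bar u$ as a convex combination of at most $n+1$ points of $T$, and these points all lie in a single integral box of side at most $|I| \le \min(n,m)$ around a suitable integer point (indeed $\bar u$ differs from some element of $T$ by at most $\min(n,m)$ in each coordinate after a more careful argument).

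The cleaner route, which I expect to be the intended one: use integral convexity of each $S_i$ to expand $\bar u$. Since $x \in \overline{W}$ and $W$ is \emph{not} necessarily integrally convex, one cannot apply \eqref{icsetdef1} to $W$ directly. Instead I would argue componentwise on $I$. Peel off the summands in $I$ one at a time. Because $S_i$ is integrally convex, any point of $\overline{S_i}$ lies in the convex hull of $S_i \cap {\rm N}(\cdot)$, which is contained in a unit cube; so replacing the ``fractional part'' contributed by $S_i$ by an actual lattice point of $S_i$ costs at most $(1 - 1/n)$ in $\ell_\infty$-norm per summand — here the factor $1 - 1/n$ comes from the fact that the farthest a point of a unit cube in $\RR^n$ can be (in $\ell_\infty$) from the nearest vertex, when one insists on staying within the convex hull of the actual points present, is governed by the barycentric-coordinate bound, exactly as in the proof of Theorem~\ref{THshfostarr} but with the $\ell_\infty$ metric and the unit-cube geometry in place of the $\ell_2$/inner-radius estimate. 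Summing over the $|I| \le \min(n,m)$ bad summands yields total error at most $(1 - 1/n)\min(n,m) = \alpha(n,m)$, proving the first claim.

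For the integral refinement: if in addition $x \in \ZZ^n$, then after rounding we have produced $z \in W$ with $z \in \ZZ^n$ and $\|x - z\|_\infty \le \alpha(n,m)$; but both $x$ and $z$ are integer vectors, so $\|x-z\|_\infty$ is an integer and hence $\|x - z\|_\infty \le \lfloor \alpha(n,m)\rfloor$. It remains to check $\lfloor \alpha(n,m)\rfloor = \min(n,m) - 1$: writing $k = \min(n,m) \le n$, we have $\alpha(n,m) = k - k/n$ with $0 < k/n \le 1$, so $\alpha(n,m) \in (k-1, k]$, and it equals $k$ only when $k = n$, in which case $\alpha = n - 1$ is already an integer; in all cases $\lfloor \alpha(n,m)\rfloor = k - 1 = \min(n,m)-1$.

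The main obstacle is the first paragraph's geometric estimate: one must show that the ``rounding cost'' for a single integrally convex summand is at most $1 - 1/n$ in $\ell_\infty$, not merely $1$, and that these costs \emph{add up} cleanly over the $I$-summands rather than interacting. I would handle this by a direct argument on convex combinations: bundle the fractional contributions into a single point of $\overline{\sum_{i\in I} S_i}$ lying in an integral box of side $\le \min(n,m)$ (using that each $S_i$-part can be taken in ${\rm N}(\cdot)$), express it via Carath\'eodory with $\le n+1$ vertices, and observe that the vertex maximizing any coordinate is at $\ell_\infty$-distance at most $(1 - \frac{1}{n+1})$... — the precise constant extraction, and matching it to $1 - 1/n$, is where the care is needed, and where I would lean on the structure of ${\rm N}(x)$ in \eqref{intneighbordeffloorceil} together with the hole-freeness \eqref{icsetholefree} of each $S_i$.
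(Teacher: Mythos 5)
Your overall architecture is the right one and essentially matches the paper's: combine the Shapley--Folkman lemma with integral convexity to reduce each of the $|I|\le\min(n,m)$ ``fractional'' summands to a subset of a translated unit cube, round each such summand to an actual lattice point of $S_i$ at $\ell_\infty$-cost at most $1-1/n$, and add up the costs. (The paper localizes first, setting $T_i=S_i\cap {\rm N}(y^i)$, and then applies Shapley--Folkman to the $T_i$; you apply Shapley--Folkman to the $S_i$ first and localize afterwards. Both orders work, since $\overline{\sum_{i\in I}S_i}=\sum_{i\in I}\overline{S_i}$ lets you split the fractional part summand by summand, and your second ``cleaner route'' correctly abandons the first paragraph's attempt to treat $\sum_{i\in I}S_i$ as a single integrally convex set, which it need not be.) Your derivation of $\lfloor\alpha(n,m)\rfloor=\min(n,m)-1$ is also fine apart from the interval being written as $(k-1,k]$ when it should be $[k-1,k)$.

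The genuine gap is the single quantitative ingredient you defer: that for any $S\subseteq\{0,1\}^n$ and any $x\in\overline{S}$ there is $v\in S$ with $\|x-v\|_\infty\le 1-1/n$. This is exactly Lemma~\ref{LMptinunitcube}, and it is not ``governed by the barycentric-coordinate bound'' in the way you suggest: the obvious argument (Carath\'eodory gives a representation with at most $n+1$ atoms, so some weight is at least $1/(n+1)$, whence $\|x-v\|_\infty\le 1-\lambda_v\le 1-\tfrac{1}{n+1}$) yields only the constant $n/(n+1)$, which is strictly weaker and does not give $\alpha(n,m)$. The constant $1-1/n$ is tight (take $S=\{\unitvec{1},\ldots,\unitvec{n}\}$ and $x=(1/n,\ldots,1/n)$, as in Remark~\ref{RMtightness}), and the paper needs a full subsection to prove it, via an iterative coordinate-by-coordinate partition of $S$ driven by the assumption that every $v\in S$ violates the bound: after normalizing so that $x_i\ge 1/2$ for all $i$, one repeatedly finds a coordinate $i_j$ with $x_{i_j}>1-1/n$, splits off the elements of $S$ with $v_{i_j}=0$ (total weight less than $1/n$), and after $n-1$ steps reaches a contradiction either with the all-ones vector or with Claim~1 (a single surviving atom of weight greater than $1/n$). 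Without this lemma, or some substitute argument achieving the constant $1-1/n$ rather than $1-1/(n+1)$, your proof establishes only the weaker bound $\tfrac{n}{n+1}\min(n,m)$ for real $x$ (though, as it happens, the integer-point conclusion $\|x-z\|_\infty\le\min(n,m)-1$ would still follow from that weaker constant).
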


\begin{theorem}  \label{THshapfolkL2IC}
Let $S_{i} \subseteq \ZZ\sp{n}$ 
$(i=1,2,\ldots,m)$
be integrally convex sets
and 
$W = S_{1} + S_{2} + \cdots + S_{m}$,
where $n \geq 1$.
For any $x \in \overline{W}$,
there exists $z \in W$
that satisfies 
$\| x - z \|_{2} \leq \beta(n,m)$
(and hence $\| x - z \|_{\infty} \leq \beta(n,m)$).
If $x \in \overline{W} \cap \ZZ\sp{n}$,
in particular, then
$\| x - z \|_{\infty} \leq \lfloor \beta(n,m) \rfloor$.
\finbox
\end{theorem}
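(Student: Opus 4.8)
The plan is to reduce this statement to the classical Shapley--Folkman--Starr theorem (Theorem~\ref{THshfostarr}) applied not to the sets $S_{i}$ themselves, but to suitable \emph{local} pieces of them which are automatically small. First I would take $x \in \overline{W}$ and, using $\overline{W} = \overline{S_{1}} + \cdots + \overline{S_{m}}$, write $x = \sum_{i=1}\sp{m} x\sp{(i)}$ with $x\sp{(i)} \in \overline{S_{i}}$. Since each $S_{i}$ is integrally convex, the defining property \eqref{icsetdef1} gives $x\sp{(i)} \in \overline{S_{i} \cap {\rm N}(x\sp{(i)})}$. Set $T_{i} := S_{i} \cap {\rm N}(x\sp{(i)})$; this is a finite (hence compact) nonempty set of integer points, and by \eqref{intneighbordeffloorceil} it lies in the box $\prod_{j=1}\sp{n} [\lfloor x\sp{(i)}_{j} \rfloor, \lceil x\sp{(i)}_{j} \rceil]$, whose side lengths are $0$ or $1$.

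The key observation is that the inner radius of $T_{i}$ is uniformly bounded. Taking $c\sp{(i)}$ to be the centre of the above box, every point of $T_{i}$ is within Euclidean distance $\frac{1}{2}\sqrt{n}$ of $c\sp{(i)}$, so ${\rm rad}(T_{i}) \leq \frac{1}{2}\sqrt{n}$ by \eqref{SFraddef}; and since $r(T) \leq {\rm rad}(T)$ for any compact $T$ (take the whole set itself as the competitor in the infimum in \eqref{SFinnerraddef}), we get $r(T_{i}) \leq \frac{1}{2}\sqrt{n}$ for every $i$. Moreover $x = \sum_{i} x\sp{(i)} \in \sum_{i} \overline{T_{i}} = \overline{\sum_{i} T_{i}}$. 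Now apply Theorem~\ref{THshfostarr} to $T_{1}, \ldots, T_{m}$ with $L = \frac{1}{2}\sqrt{n}$: there is $z \in T_{1} + \cdots + T_{m}$ with $\| x - z \|_{2} \leq \frac{1}{2}\sqrt{n}\,\sqrt{\min(n,m)} = \beta(n,m)$. Since $\sum_{i} T_{i} \subseteq \sum_{i} S_{i} = W$, this $z$ is the desired point, and $\| x - z \|_{\infty} \leq \| x - z \|_{2} \leq \beta(n,m)$.

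Finally, if $x \in \overline{W} \cap \ZZ\sp{n}$, then $z \in W \subseteq \ZZ\sp{n}$ forces $x - z \in \ZZ\sp{n}$, so $\| x - z \|_{\infty}$ is a nonnegative integer bounded by $\beta(n,m)$, hence bounded by $\lfloor \beta(n,m) \rfloor$. I expect the only real content to be the passage to the local pieces $T_{i}$ --- this is exactly where integral convexity enters, and it is what makes the uniform bound $r(T_{i}) \leq \frac{1}{2}\sqrt{n}$ available; once this is in place the result is immediate from Theorem~\ref{THshfostarr}. The remaining points (nonemptiness of each $T_{i}$, and that the decomposition $x = \sum_{i} x\sp{(i)}$ is fixed once and used consistently for all $i$) are routine.
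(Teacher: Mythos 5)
Your proposal is correct and follows essentially the same route as the paper: decompose $x=\sum_i y\sp{i}$ with $y\sp{i}\in\overline{S_{i}}$, use integral convexity to pass to the local pieces $T_{i}=S_{i}\cap{\rm N}(y\sp{i})$, observe that each $T_{i}$ sits in a translated unit cube so that $r(T_{i})\leq{\rm rad}(T_{i})\leq\sqrt{n}/2$, and invoke Theorem~\ref{THshfostarr} with $L=\sqrt{n}/2$. The integrality refinement at the end is also handled exactly as in the paper.
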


\begin{example} \rm \label{EXshfoIC2}
In Figure~\ref{FGminkowhole} (Example~\ref{EXicdim2sumhole}),
we have $n=2$, $m=2$, $\alpha(n,m)=\beta(n,m) = 1$.
For $x=(1,1) \in \overline{S_{1}+ S_{2}}$,
which is a `hole,' we can take $z=(1,0)\in S_{1}+ S_{2}$
satisfying  $\| x - z \|_{\infty} \leq 1$.
\finbox
\end{example}

A combination of Theorems \ref{THshapfolkLinfIC} and \ref{THshapfolkL2IC}
implies that, for any $x \in \overline{W}$,
there exists $z \in W$ that satisfies 
\begin{equation} \label{shapfolkLinfICmin}
 \| x - z \|_{\infty} \leq \min \{ \alpha(n,m), \beta(n,m) \} 
\qquad (n \geq 2, m \geq 1);
\end{equation}
if $x \in \overline{W} \cap \ZZ\sp{n}$,
in particular, then
\begin{equation} \label{shapfolkLinfICminZ}
 \| x - z \|_{\infty} \leq 
\min \{ \lfloor \alpha(n,m) \rfloor, \lfloor \beta(n,m) \rfloor \}
\qquad (n \geq 2, m \geq 1).
\end{equation}

The following proposition determines which is smaller between 
$\alpha(n,m)$ and $\beta(n,m)$ depending on $(n,m)$.
The proof is given in Section~\ref{SCproofalphabeta}.
Roughly speaking, 
$\alpha(n,m)$ is smaller when $m$ is small,
and $\beta(n,m)$ is smaller when $m$ is large.

\begin{proposition} \label{PRmnthreshV2}  
\quad

\noindent
{\rm (1)} 
Case of $n=2$: \ 
$\alpha(2,m)=\beta(2,m)=1$ \ for all \  $m \geq 2$.

\noindent
{\rm (2)} 
Case of $m=1$: \ 
$\alpha(n,1)< \beta(n,1)$ \  for all \   $n \geq 2$.

\noindent
{\rm (3)} 
Case of $m \geq 2$:  \ 
$\alpha(n,m) >  \beta(n,m)$ if $3 \leq n \leq 4m -3$,
and
$\alpha(n,m)< \beta(n,m)$  if $n \geq 4m -2$.
\finbox
\end{proposition}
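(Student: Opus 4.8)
The plan is to reduce everything to comparing the two elementary quantities $\alpha(n,m) = (1-1/n)\min(n,m)$ and $\beta(n,m) = \tfrac12\sqrt{n\cdot\min(n,m)}$ directly, treating the three cases of the proposition separately according to the regime of $(n,m)$. First I would dispose of Case (1): when $n=2$ and $m\ge 2$ we have $\min(n,m)=2$, so $\alpha(2,m) = (1-1/2)\cdot 2 = 1$ and $\beta(2,m) = \tfrac12\sqrt{2\cdot 2} = 1$; equality is immediate. Case (2) is just as quick: when $m=1$ we have $\min(n,m)=1$, so $\alpha(n,1) = 1-1/n$ and $\beta(n,1) = \tfrac12\sqrt{n}$; since $n\ge 2$ gives $\tfrac12\sqrt{n}\ge \tfrac{\sqrt2}{2} > 1 > 1-1/n$, the strict inequality $\alpha(n,1)<\beta(n,1)$ follows.

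The substance is Case (3), where $m\ge 2$. Here I would split on whether $n\le m$ or $n>m$ to resolve $\min(n,m)$, but in fact the cleaner route is to observe that when $3\le n\le 4m-3$ the relevant comparison always forces $\min(n,m)$ into a form that makes $\alpha$ larger, and when $n\ge 4m-2$ it forces $\beta$ larger. Concretely, if $n\le m$ then $\min(n,m)=n$, so $\alpha = n-1$ and $\beta = \tfrac12 n$; then $\alpha>\beta \iff n-1 > n/2 \iff n>2$, which holds since $n\ge 3$, and note $n\le m$ together with $m\ge 2$ forces $n\le m < 4m-2$, so this subcase lies entirely in the first alternative, consistently. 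If instead $n>m$ then $\min(n,m)=m$, so $\alpha = (1-1/n)m$ and $\beta = \tfrac12\sqrt{nm}$; squaring (both sides positive) and dividing by $m>0$, the comparison $\alpha$ vs.\ $\beta$ becomes $(1-1/n)^2 m$ vs.\ $n/4$, i.e.\ $4m(n-1)^2$ vs.\ $n^3$. So I would prove: $4m(n-1)^2 > n^3$ when $3\le n\le 4m-3$, and $4m(n-1)^2 < n^3$ when $n\ge 4m-2$ (within the regime $n>m$).

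For the threshold analysis, fix $n$ and treat $4m(n-1)^2 - n^3$ as a (strictly increasing, since $n\ge 3$) linear function of $m$: it is positive iff $m > \tfrac{n^3}{4(n-1)^2}$ and negative iff $m < \tfrac{n^3}{4(n-1)^2}$. The task is then to check that the integer threshold $4m-3$ versus $4m-2$ on $n$ matches this: namely that $n\le 4m-3$ is equivalent to $m\ge \lceil \tfrac{n^3}{4(n-1)^2}\rceil$-type bound in the appropriate direction. I would verify the algebraic identity that $\tfrac{n^3}{4(n-1)^2} < \tfrac{n+3}{4}$ and $\tfrac{n^3}{4(n-1)^2} > \tfrac{n+2}{4}$ for all integers $n\ge 3$; the first says $n^3 < (n+3)(n-1)^2 = n^3 + n^2 - 5n + 3$, i.e.\ $n^2 - 5n + 3 > 0$, true for $n\ge 5$ but needing separate inspection at $n=3,4$; the second says $n^3 > (n+2)(n-1)^2 = n^3 - 3n + 2$, i.e.\ $3n > 2$, always true. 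Once these are pinned down, $n\le 4m-3 \iff m \ge (n+3)/4 > \tfrac{n^3}{4(n-1)^2}$ gives $4m(n-1)^2 > n^3$, and $n\ge 4m-2 \iff m \le (n+2)/4 < \tfrac{n^3}{4(n-1)^2}$ gives $4m(n-1)^2 < n^3$.

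The main obstacle I anticipate is the small-$n$ bookkeeping: for $n=3$ and $n=4$ the inequality $n^2-5n+3>0$ fails ($n=3$ gives $-3$, $n=4$ gives $-1$), so the clean bound $\tfrac{n^3}{4(n-1)^2} < \tfrac{n+3}{4}$ is false there and these cases must be checked by hand. For $n=3$: $\tfrac{27}{16} = 1.6875$, so $4m\cdot 4 = 16m > 27 \iff m\ge 2$, and indeed $3 \le 4m-3 \iff m\ge 1.5 \iff m\ge 2$, matching; and $3\ge 4m-2 \iff m\le 1.25 \iff m=1$, which is excluded from Case (3). For $n=4$: $\tfrac{64}{36} = 16/9 \approx 1.78$, so $4m\cdot 9 = 36m > 64 \iff m\ge 2$, and $4\le 4m-3 \iff m\ge 1.75 \iff m\ge 2$; also $4\ge 4m-2 \iff m\le 1.5 \iff m=1$, again excluded. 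So in the regime $n>m$ with $m\ge 2$, the cases $n=3,4$ only ever occur with $n\le 4m-3$ and the inequality $\alpha>\beta$ is confirmed; for $n\ge 5$ the general argument applies. Combining the subcases $n\le m$ and $n>m$ completes Case (3), and hence the proposition. $\finbox$
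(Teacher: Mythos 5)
Your treatment of cases (1) and (3) is correct and follows essentially the same route as the paper: the same split into $n\le m$ (where $\alpha=n-1>n/2=\beta$ for $n\ge 3$) and $n>m$ (where the comparison reduces to $4m(n-1)^2$ versus $n^3$), the same threshold $\theta(n)=n^{3}/(4(n-1)^{2})$ bracketed by $(n+2)/4<\theta(n)<(n+3)/4$ for $n\ge 5$, and the same hand-check of $n=3,4$ where the upper bracket fails.

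There is, however, a genuine error in your case (2). You justify $\alpha(n,1)<\beta(n,1)$ by the chain $\tfrac{1}{2}\sqrt{n}\ge\tfrac{\sqrt{2}}{2}>1>1-1/n$, but $\tfrac{\sqrt{2}}{2}=0.707\ldots<1$, so the middle link is false. Moreover the argument cannot be repaired by any uniform constant lower bound on $\beta(n,1)$: since $1-1/n\to 1$ while your bound $\tfrac{\sqrt{2}}{2}$ is below $1$, comparing against a constant does not suffice. The claim itself is true, and the fix is the one the paper uses: for $n\ge 4$ one has $\tfrac{1}{2}\sqrt{n}\ge 1>1-1/n$, and the remaining cases $n=2,3$ are checked directly ($\tfrac{1}{2}<\tfrac{\sqrt{2}}{2}$ and $\tfrac{2}{3}<\tfrac{\sqrt{3}}{2}$). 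Alternatively, note that $\tfrac{1}{2}\sqrt{n}>1-\tfrac{1}{n}$ for all $n\ge 2$ is equivalent to $n^{3}>4(n-1)^{2}$, and $n^{3}-4n^{2}+8n-4$ equals $4$ at $n=2$ and has everywhere-positive derivative $3n^{2}-8n+8$. With this repair the proof is complete and matches the paper's.
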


The values of $\lfloor \alpha(n,m) \rfloor$ and $\lfloor \beta(n,m) \rfloor$
used in \eqref{shapfolkLinfICminZ} for an integral point $x$
are shown below.
For each $(n,m)$, the smaller of the two is in  boldface.
\[
\begin{array}{l|cc|cc|cc|cc|cc|cc|}
 & \multicolumn{2}{c|}{m=1}  
 & \multicolumn{2}{c|}{m=2}  
  & \multicolumn{2}{c|}{m=3}  
  & \multicolumn{2}{c|}{m=4} 
  & \multicolumn{2}{c|}{m=5} 
\\
  & \lfloor \alpha \rfloor & \lfloor \beta \rfloor 
  & \lfloor \alpha \rfloor & \lfloor \beta \rfloor 
  & \lfloor \alpha \rfloor & \lfloor \beta \rfloor 
  & \lfloor \alpha \rfloor & \lfloor \beta \rfloor 
  & \lfloor \alpha \rfloor & \lfloor \beta \rfloor 
\\ \hline
n=2 & 0 & 0 & 1 & 1 & 1 & 1 & 1 & 1 & 1 & 1 
\\
n=3 & 0 & 0 & 1 & 1 & 2 & {\bf 1} & 2 & {\bf 1} & 2 & {\bf 1} 
\\
n=4 & {\bf 0} & 1 & 1 & 1 & 2 & {\bf 1} & 3 & {\bf 2} & 3 & {\bf 2} 
\\
n=8 & {\bf 0} & 1 & {\bf 1} & 2 & 2 & 2 & 3 & {\bf 2} & 4 & {\bf 3}  
\\
n=12 & {\bf 0} & 1 & {\bf 1} & 2 & {\bf 2} & 3 & 3 & 3 & 4 & {\bf 3} 
\\
n=16 & {\bf 0} & 2 & {\bf 1} & 2 & {\bf 2} & 3 & {\bf 3} & 4 & 4 & 4  
\\ \hline
\end{array}
\]

The particular case of Theorem~\ref{THshapfolkLinfIC} 
for $m=1$
is worthy of attention.
For $m=1$, we have 
$\alpha(n,1) = 1 - 1/n$ for $n \geq 2$,
and hence
$\lfloor \alpha(n,1) \rfloor = 0$ for all $n \geq 2$.
The latter 
(i.e., $\lfloor \alpha(n,1) \rfloor = 0$)
corresponds to the fact that 
$S = \overline{S} \cap \ZZ\sp{n}$
for an integrally convex set $S$.
A combination of the former 
(i.e., $\alpha(n,1) = 1 - 1/n$)
with Theorem~\ref{THminkowM}
results in a sharp bound 
for the case of \Mnat-convex summands $S_{i}$.

\begin{theorem}  \label{THshapfolkM}
Let $S_{i} \subseteq \ZZ\sp{n}$ $(i=1,2,\ldots,m)$
be \Mnat-convex sets and 
$W = S_{1} + S_{2} + \cdots + S_{m}$,
where $n \geq 2$.
For any $x \in \overline{W}$,
there exists $z \in W$ that satisfies 
$\| x - z \|_{\infty} \leq 1 -  1/n$.
\end{theorem}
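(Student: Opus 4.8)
The plan is to obtain Theorem~\ref{THshapfolkM} as an immediate consequence of Theorem~\ref{THminkowM} together with the $m=1$ instance of Theorem~\ref{THshapfolkLinfIC}, with no new combinatorial argument. The underlying point is that the class of \Mnat-convex sets is closed under Minkowski summation, so one may ``forget'' the decomposition $W = S_{1}+\cdots+S_{m}$ and treat $W$ as a single \Mnat-convex set; the sharper constant $1-1/n$ then comes for free, because $\alpha(n,1) = (1-1/n)\min(n,1) = 1-1/n$.

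Concretely, I would first invoke Theorem~\ref{THminkowM} to conclude that $W = S_{1}+\cdots+S_{m}$ is itself an \Mnat-convex set. Since every \Mnat-convex set is integrally convex (the same fact already used in the proof of Corollary~\ref{COminkowMholefree}), $W$ is in particular integrally convex. Now apply Theorem~\ref{THshapfolkLinfIC} to the single integrally convex set $W$, i.e.\ with the number of summands taken to be $1$ and the lone summand equal to $W$; since $\alpha(n,1) = 1-1/n$, this gives, for every $x \in \overline{W}$, a point $z \in W$ with $\|x-z\|_{\infty} \le 1-1/n$, which is exactly the claim. The hypothesis $n \ge 2$ is inherited directly from Theorem~\ref{THshapfolkLinfIC}.

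There is essentially no obstacle once these two ingredients are in place; the only point requiring care is the bookkeeping of the parameter $m$, which in the statement counts the original \Mnat-convex summands but equals $1$ in the application of Theorem~\ref{THshapfolkLinfIC}, where $W$ is regarded as a single set. As a consistency check one can note that the integral refinement in Theorem~\ref{THshapfolkLinfIC} gives $\lfloor \alpha(n,1)\rfloor = 0$, so for $x \in \overline{W}\cap\ZZ\sp{n}$ one may take $z = x$, in agreement with $W$ being hole-free by Corollary~\ref{COminkowMholefree}. (Small examples show that the constant $1-1/n$ cannot be lowered, so the bound is sharp, but this observation is not needed for the proof.)
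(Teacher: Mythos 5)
Your proposal is correct and coincides with the paper's own proof: both invoke Theorem~\ref{THminkowM} to see that $W$ is \Mnat-convex (hence integrally convex) and then apply Theorem~\ref{THshapfolkLinfIC} with $m=1$ to get the bound $\alpha(n,1)=1-1/n$. No further comment is needed.
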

\begin{proof}
Since the Minkowski sum of 
\Mnat-convex sets remains to be \Mnat-convex (Theorem~\ref{THminkowM}),
$W$ is an \Mnat-convex set, and hence it is an integrally convex set.
By Theorem~\ref{THshapfolkLinfIC} with $m=1$,
there exists $z \in W$
that satisfies 
$\| x - z \|_{\infty} \leq \alpha(n,1) = 1 - 1/n$.
\end{proof}

For \Lnat-convex summands $S_{i}$
we can derive the following bounds immediately from 
Theorems \ref{THshapfolkLinfIC} and \ref{THshapfolkL2IC}.

\begin{theorem}  \label{THshapfolkL}
Let $S_{i} \subseteq \ZZ\sp{n}$  $(i=1,2,\ldots,m)$
be \Lnat-convex sets,
$W = S_{1} + S_{2} + \cdots + S_{m}$,
and $x \in \overline{W}$, where $n \geq 2$.

\noindent
{\rm (1)} 
There exists $z \in W$ that satisfies 
$\| x - z \|_{\infty} \leq \alpha(n,\lceil m/2 \rceil)$.
If $x \in \overline{W} \cap \ZZ\sp{n}$, then
$\| x - z \|_{\infty} \leq \lfloor \alpha(n,\lceil m/2 \rceil) \rfloor 
= \min (n,\lceil m/2 \rceil)-1$.

\noindent
{\rm (2)} 
There exists $z \in W$ that satisfies 
$\| x - z \|_{2} \leq \beta(n,\lceil m/2 \rceil)$.
If $x \in \overline{W} \cap \ZZ\sp{n}$, then
$\| x - z \|_{\infty} \leq \lfloor \beta(n,\lceil m/2 \rceil) \rfloor$.

\noindent
{\rm (3)} 
There exists $z \in W$ that satisfies 
$ \| x - z \|_{\infty} \leq 
\min \{ \alpha(n,\lceil m/2 \rceil), \beta(n,\lceil m/2 \rceil) \}$. 
If $x \in \overline{W} \cap \ZZ\sp{n}$,
then
$ \| x - z \|_{\infty} \leq 
\min \{ \lfloor \alpha(n,\lceil m/2 \rceil) \rfloor, 
        \lfloor \beta(n,\lceil m/2 \rceil) \rfloor \}$.
\end{theorem}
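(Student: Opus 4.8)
The plan is to reduce statements (1)--(3) to the integrally convex case already handled in Theorems~\ref{THshapfolkLinfIC} and~\ref{THshapfolkL2IC}, using the fact that the Minkowski sum of \emph{two} \Lnat-convex sets is integrally convex \cite[Theorem 8.42]{Mdcasiam}.

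First I would group the summands into pairs. Writing $m = 2q$ or $m = 2q+1$ with $q = \lfloor m/2 \rfloor$, set $T_{k} = S_{2k-1} + S_{2k}$ for $k = 1, \ldots, q$ and, when $m$ is odd, $T_{q+1} = S_{m}$. Each $T_{k}$ is integrally convex: for $k \le q$ by \cite[Theorem 8.42]{Mdcasiam}, and $T_{q+1} = S_{m}$ because every \Lnat-convex set is integrally convex. Since $W = T_{1} + \cdots + T_{p}$ with $p = \lceil m/2 \rceil$ (so $p = q$ for even $m$ and $p = q+1$ for odd $m$), the set $W$ is exhibited as the Minkowski sum of $p$ integrally convex subsets of $\ZZ^{n}$.

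Next, since $n \ge 2$, I would apply Theorem~\ref{THshapfolkLinfIC} to this representation with $p$ summands in place of $m$: for any $x \in \overline{W}$ it produces $z \in W$ with $\| x - z \|_{\infty} \le \alpha(n, p) = \alpha(n, \lceil m/2 \rceil)$, and with $\| x - z \|_{\infty} \le \lfloor \alpha(n, \lceil m/2 \rceil) \rfloor = \min(n, \lceil m/2 \rceil) - 1$ when $x$ is integral; this is (1). Feeding the same representation into Theorem~\ref{THshapfolkL2IC} gives (2), and taking the smaller of the two bounds (exactly as in \eqref{shapfolkLinfICmin}--\eqref{shapfolkLinfICminZ}) gives (3).

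No new quantitative estimate is needed; the only points requiring care are that the pairing yields exactly $\lceil m/2 \rceil$ integrally convex summands --- including the degenerate leftover singleton $S_{m}$ when $m$ is odd --- and that the hypothesis $n \ge 2$ is inherited by the reduced representation. The conceptual content is the observation that \cite[Theorem 8.42]{Mdcasiam} is precisely what allows one to halve the effective number of summands, in contrast to the \Mnat-convex case where Theorem~\ref{THminkowM} collapses all summands into one. One should not expect to go below $\lceil m/2 \rceil$ in general, since the Minkowski sum of three \Lnat-convex sets need not be integrally convex (Example~\ref{EXminkow3lnatset}).
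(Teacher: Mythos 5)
Your proof is correct and follows essentially the same route as the paper: the paper likewise invokes \cite[Theorem 8.42]{Mdcasiam} to regard $W$ as a Minkowski sum of $\lceil m/2 \rceil$ integrally convex sets and then applies Theorems~\ref{THshapfolkLinfIC} and~\ref{THshapfolkL2IC}. Your write-up merely makes explicit the pairing and the leftover singleton for odd $m$, which the paper leaves implicit.
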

\begin{proof}
By \cite[Theorem 8.42]{Mdcasiam},
the Minkowski sum of two \Lnat-convex sets is integrally convex
(though not \Lnat-convex).
This allows us to replace $m$ by
$\lceil m/2 \rceil$
in the upper bounds in 
Theorems \ref{THshapfolkLinfIC} and \ref{THshapfolkL2IC}.
\end{proof}

\begin{example}[{\cite[Example 4.12]{MS01rel}}]  \rm \label{EXminkow3lnatset}
Consider three \Lnat-convex sets
$S_{1}  =   \{(0, 0, 0),   \allowbreak    (1, 1, 0)\}$, 
$S_{2}  =  \{(0, 0, 0), (0, 1, 1)\}$, and
$S_{3}  =  \{(0, 0, 0), (1, 0, 1)\}$.
Their Minkowski sum 
$S = S_{1} + S_{2} + S_{3}$
is given by
$S = \{(0,0,0),  \allowbreak  (0,1,1),  \allowbreak   
     (1,1,0),  \allowbreak   (1,0,1),  \allowbreak   (2,1,1),(1,1,2),(1,2,1),(2,2,2)\}$,
which has a hole  $(1,1,1) \in \overline{S} \setminus S$.
Theorem \ref{THshapfolkLinfIC} (with $n=3$, $m=3$)
gives a bound $\| x - z \|_{\infty} \leq 2$,
whereas a better bound
$\| x - z \|_{\infty} \leq 1$
is obtained from  Theorem \ref{THshapfolkL} (1).
\finbox
\end{example}

\begin{remark} \rm \label{RMvohra}
A recent paper by Nguyen--Vohra \cite{NV24}
gives an interesting variant of 
the Shapley--Folkman theorem.
A polytope $P$ with vertices in $\{ 0, 1 \}\sp{n}$ 
is called 
{\em $\Delta$-uniform} if
each of its edges, which is a $\{-1, 0, 1 \}$ vector, 
has at most $\Delta$ positive and at most $\Delta$ negative coordinates.
The theorem of Nguyen and Vohra 
(to be called ``Theorem~NV" here) implies the following:
Let $S_{i}$ $(i=1,2,\ldots,m)$
be subsets of 
$\{ 0, 1 \}\sp{n}$ 
such that each $\overline{S_{i}}$ is $\Delta$-uniform,
and let $W = S_{1} + S_{2} + \cdots + S_{m}$.
For any $x \in \overline{W}$,
there exists $z \in W$ that satisfies 
$\| x - z \|_{\infty} < 2 \Delta -1$.
If $x \in \overline{W} \cap \ZZ\sp{n}$, 
then $\| x - z \|_{\infty} \leq 2\Delta - 2$.

The following comparisons may be made
between Theorem~NV and our results.

\begin{itemize}

\item
Theorem~NV captures a property of the summand sets $S_{i}$ 
$\subseteq \{ 0, 1 \}\sp{n}$
in terms of a parameter $\Delta$ related to edge vectors,
and gives a bound 
on $\| x - z \|_{\infty}$
using $\Delta$, independent of $n$ and $m$.
In contrast,
Theorem~\ref{THshapfolkLinfIC}
exploits no specific properties.
Recall that any subset of
$\{ 0, 1 \}\sp{n}$ is integrally convex.

\item
For arbitrary summand sets
$S_{i} \subseteq \{ 0, 1 \}\sp{n}$,
we can take $\Delta = n$.
For $x \in \overline{W}$,
Theorem~NV gives $\| x - z \|_{\infty} < 2n-1$,
whereas Theorem~\ref{THshapfolkLinfIC} gives
$\| x - z \|_{\infty} \leq \alpha(n,m) = (1 - 1/n) \min (n,m)$.
We have $2n-1 > \alpha(n,m)$
for all $n \ge 2$ and $m \ge 1$.

\item
When each summand $S_{i}$ 
is an \Mnat-convex set contained in $\{ 0,1 \}\sp{n}$
(e.g., arising from the independent sets of a matroid),
we have $\Delta = 1$
and Theorem~NV gives 
$\| x - z \|_{\infty} <  1$
for $x \in \overline{W}$
and $\| x - z \|_{\infty} = 0$
for $x \in \overline{W} \cap \ZZ\sp{n}$,
whereas Theorem~\ref{THshapfolkM} gives
$\| x - z \|_{\infty} \le  1 - 1/n$
for $x \in \overline{W}$ (when $n \ge 2$)
and Corollary~\ref{COminkowMholefree} shows
$\| x - z \|_{\infty} = 0$
for $x \in \overline{W} \cap \ZZ\sp{n}$.

\item
When each summand $S_{i}$ 
is an \Lnat-convex set contained in $\{ 0,1 \}\sp{n}$,
we have 
$\Delta = n$.
For $x \in \overline{W}$,
Theorem~NV gives $\| x - z \|_{\infty} < 2n-1$,
whereas Theorem~\ref{THshapfolkL}(1) gives
$\| x - z \|_{\infty} \leq \alpha(n,\lceil m/2 \rceil)$ (when $n \ge 2$).
We have $2n-1 > \alpha(n,\lceil m/2 \rceil)$
for all $n \ge 2$ and $m \ge 1$.
\finbox
\end{itemize}
\end{remark}

\section{Proofs}
\label{SCproof}

\subsection{Proofs of Theorems \ref{THshapfolkLinfIC} and \ref{THshapfolkL2IC}}
\label{SCproofmainthm}

In this section we prove the main theorems
(Theorems \ref{THshapfolkLinfIC} and \ref{THshapfolkL2IC})
of this paper.
For the proof of Theorem~\ref{THshapfolkLinfIC},
we need the following lemma concerning a subset of $\{ 0,1 \}\sp{n}$ in general,
which may be useful in some other contexts.

\begin{lemma}  \label{LMptinunitcube}
Let $S \subseteq \{ 0,1 \}\sp{n}$, where $n \geq 2$.
For any $x \in \overline{S}$,
there exists $v\sp{*} \in S$ that satisfies 
\begin{equation} \label{ptinunitcube}
 \| x - v\sp{*}  \|_{\infty} \leq 1 - \frac{1}{n}.
\end{equation}
\end{lemma}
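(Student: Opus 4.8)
The plan is to reduce the statement to a claim about a single point $x$ in the cube $[0,1]^n$ lying in the convex hull of $S$, and to find a vertex $v^*\in S$ that is $\ell_\infty$-close to $x$ by a pigeonhole/averaging argument over the coordinates. First I would observe that, since $S\subseteq\{0,1\}^n$, every $x\in\overline S$ automatically lies in $[0,1]^n$, so $\|x-v\|_\infty\le 1$ trivially for every $v\in S$; the content is to gain the extra $1/n$. Write $x$ as a convex combination $x=\sum_{k}\lambda_k v^{(k)}$ of vertices $v^{(k)}\in S$ (Carath\'eodory lets us keep this finite). The obstruction to $\|x-v^{(k)}\|_\infty$ being small for some $k$ is that $x$ could have a coordinate very close to $0$ while $v^{(k)}$ has that coordinate equal to $1$, or vice versa; I want to show this bad event cannot happen simultaneously in all the $v^{(k)}$'s that appear with positive weight.

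The key step is the following averaging estimate. For each vertex $v=v^{(k)}$ in the representation and each coordinate $i$, note $|x_i-v_i|=x_i$ if $v_i=0$ and $|x_i-v_i|=1-x_i$ if $v_i=1$; in either case $|x_i-v_i|\le\max(x_i,1-x_i)$, but more usefully, summing the \emph{right} quantity, we get that for a vertex $v$ drawn with probability $\lambda_k$, the expected value of $|x_i-v_i|$ equals $x_i(1-x_i)+ (1-x_i)x_i\cdot$—more precisely $\mathbb E\,|x_i-v_i| = \sum_{k:\,v^{(k)}_i=0}\lambda_k\, x_i + \sum_{k:\,v^{(k)}_i=1}\lambda_k\,(1-x_i) = x_i(1-x_i)+x_i(1-x_i)$? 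I would instead argue directly on the max: I claim some vertex $v^{(k)}$ in the support has, for every coordinate $i$, either $v^{(k)}_i=1$ when $x_i\ge 1/n$ is not required—rather, the clean route is: for the coordinate(s) $i$ where $x_i$ is ``extreme'' (say $x_i<1/n$), most of the weight must sit on vertices with $v_i=0$, because $\sum_{k:\,v^{(k)}_i=1}\lambda_k=x_i<1/n$; symmetrically if $x_i>1-1/n$ then $\sum_{k:\,v^{(k)}_i=0}\lambda_k<1/n$. So for each coordinate $i$ the set of vertices that are ``wrong'' in coordinate $i$ (i.e. contribute $>1-1/n$ to $|x_i-v_i|$) carries total weight $<1/n$. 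Taking the union over the $n$ coordinates, the total weight of vertices that are wrong in \emph{some} coordinate is $<n\cdot\frac1n=1$, so there remains a vertex $v^*$ with positive weight that is not wrong in any coordinate, i.e. $|x_i-v^*_i|\le 1-1/n$ for all $i$, which is exactly \eqref{ptinunitcube}. One has to handle the boundary cases $x_i=1/n$ or $x_i=1-1/n$ (where the inequality is non-strict) carefully, but there the required bound $|x_i-v^*_i|\le 1-1/n$ holds for \emph{every} vertex, so those coordinates impose no constraint at all and can be dropped from the union bound.

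The main obstacle I anticipate is making the union bound genuinely strict so that a surviving vertex exists: I need ``$<1/n$'' rather than ``$\le 1/n$'' in each coordinate, which forces the case analysis to treat coordinates with $x_i\in\{0,1\}$ (no wrong vertices), $x_i\in(0,1/n)\cup(1-1/n,1)$ (wrong-weight strictly below $1/n$), and $x_i\in[1/n,1-1/n]$ (no constraint) separately — it is this trichotomy, together with the hypothesis $n\ge 2$ ensuring $1/n\le 1-1/n$ so the middle range is nonempty and the bound $1-1/n<1$ is meaningful, that makes the argument go through. The rest is bookkeeping.
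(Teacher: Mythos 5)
Your proof is correct, and it is a genuinely different and in fact shorter argument than the one in the paper. Your mechanism is a one-shot union bound over coordinates: writing $x=\sum_k\lambda_k v^{(k)}$, you observe that for each coordinate $i$ the vertices that are ``wrong'' in that coordinate (those with $|x_i-v_i|>1-1/n$) carry total weight exactly $x_i$ (if $x_i<1/n$, the wrong ones being those with $v_i=1$) or exactly $1-x_i$ (if $x_i>1-1/n$), and carry weight $0$ if $x_i\in[1/n,1-1/n]$; in every case this is strictly below $1/n$, so the vertices wrong in \emph{some} coordinate carry weight strictly below $n\cdot\frac1n=1$ and a surviving $v^*$ of positive weight exists. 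Your handling of the boundary values $x_i\in\{1/n,\,1-1/n\}$ and of the role of $n\ge2$ (ensuring $1/n\le 1-1/n$, so that the ``correct'' bit is never wrong) closes the only delicate points. The paper instead first normalizes to $x_i\ge 1/2$ by flipping coordinates, proves separately that any vertex of weight at least $1/n$ already satisfies the bound (via $\|x-v\|_\infty\le 1-\lambda_v$), and then argues by contradiction through an iterative chain of partitions $S=S_1^0\cup S_1^1$, $S_1^1=S_2^0\cup S_2^1,\dots$ pinned to successive coordinates with $x_{i_j}>1-1/n$, eventually cornering a single vertex of weight exceeding $1/n$. Your union-bound argument reaches the same tight constant $1-1/n$ with less machinery and no contradiction, and makes transparent where the $1/n$ comes from (one $1/n$ of ``bad weight'' budget per coordinate); the paper's route has the side benefit of exhibiting explicitly the structure of a hypothetical counterexample, but for the purpose of establishing the lemma your argument is complete as it stands. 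The only presentational caveat is the abandoned ``expected value'' digression in your middle paragraph, which should simply be deleted.
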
  
\begin{proof}
The proof is given in Section~\ref{SCproofunitcube}.
\end{proof}

\begin{remark} \rm \label{RMtightness}
The bound 
$ \| x - v\sp{*}  \|_{\infty} \leq 1 - 1/n$
in Lemma~\ref{LMptinunitcube} 
is tight.
For example, for
$S = \{ \unitvec{i} \mid i =1,2,\ldots, n \}
 = \{ 
(1,0,0,\ldots,0,0),
(0,1,0,\ldots,0,0), \ldots,   \allowbreak  (0,0,0,\ldots,0,1) \}$
and $x = (1/n, 1/n, \ldots, 1/n) \in \overline{S}$,
we have 
$ \| x - v  \|_{\infty} = 1 - 1/n$ for all $ v \in S$.
\finbox
\end{remark}

We can prove Theorem~\ref{THshapfolkLinfIC} as follows.
Since
\[
x \in \overline{S_{1}+ S_{2} + \cdots + S_{m}}
 = \overline{S_{1}} + \overline{S_{2}} + \cdots + \overline{S_{m}},
\]
the vector $x$ can be represented 
as a sum of some elements of 
$\overline{S_{1}}, \overline{S_{2}}, \ldots ,\overline{S_{m}}$.
That is,
\begin{equation} \label{icSFprf1}
 x = \sum_{i=1}\sp{m} y\sp{i} 
\end{equation}
for some
$y\sp{i} \in \overline{S_{i}}$
$(i=1,2,\ldots,m)$.
Let 
\begin{equation} \label{icSFprf3defTi}
T_{i} = S_{i} \cap {\rm N}(y\sp{i})
\end{equation}
for $i=1,2,\ldots,m$,
where ${\rm N}(y\sp{i})$ is the integral neighborhood of $y\sp{i}$ 
defined in \eqref{Nxdef}.
Since each $S_{i}$ is integrally convex,
we may use \eqref{icsetdef1} to obtain
$y\sp{i} \in \overline{S_{i} \cap {\rm N}(y\sp{i})} = \overline{T_{i}}$.
Then \eqref{icSFprf1} shows
$x \in \overline{T_{1}+ T_{2} + \cdots + T_{m}}$.

By Theorem~\ref{THshapfolkG0}
(Shapley--Folkman's lemma)
there exists
$I \subseteq \{ 1,2,\ldots,m \}$
such that
$|I| \leq \min(n,m)$ and
$x \in \overline{\sum_{i \in I} T_{i}} + \sum_{j \in J} T_{j}$,
where $J = \{ 1,2,\ldots,m \} \setminus I$.
Therefore, 
\begin{equation*} 
x = \sum_{i \in I} x\sp{i} + \sum_{j \in J} z\sp{j}
\end{equation*}
for some
$x\sp{i} \in \overline{T_{i}}$ $(i \in I)$ and
$z\sp{j} \in T_{j}$ $(j \in J)$.
Lemma~\ref{LMptinunitcube} implies that, for each $i \in I$,
there exists
$v\sp{i} \in T_{i}$
satisfying
$\| x\sp{i} - v\sp{i} \|_{\infty} \leq 1-1/n$.
Define 
\[
 z = \sum_{i \in I} v\sp{i} + \sum_{j \in J} z\sp{j},
\]
which belongs to 
$T_{1}+ T_{2} + \cdots + T_{m}$  $(\subseteq S_{1}+ S_{2} + \cdots + S_{m} = W)$.
We then have
\[ 
 \| x - z \|_{\infty}  
 = \| \sum_{i \in I} (x\sp{i} - v\sp{i}) \|_{\infty} 
 \leq \sum_{i \in I}  \| x\sp{i} - v\sp{i} \|_{\infty} 
  \leq  \left( 1 - \frac{1}{n} \right) |I|  \leq \alpha(n,m) .
\] 
Finally, if $x \in \overline{W} \cap \ZZ\sp{n}$,
we have
$\ZZ \ni \| x - z \|_{\infty} \leq \alpha(n,m)$,
whereas 
$\lfloor \alpha(n,m) \rfloor = \min (n,m)-1$.
This completes the proof of Theorem~\ref{THshapfolkLinfIC}.

\medskip

The proof of Theorem~\ref{THshapfolkL2IC} is as follows.
Each $T_{i}$ in \eqref{icSFprf3defTi}
is contained in a translated unit cube, that is,
$T_{i} \subseteq a\sp{i} + \{ 0,1 \}\sp{n}$
for some $a\sp{i} \in \ZZ\sp{n}$,
from which follows that
$r(T_{i}) \leq {\rm rad}(T_{i}) \leq \sqrt{n}/2$
for $i=1,2,\ldots,m$.
Hence we can take
$L=\sqrt{n}/2$ in Theorem~\ref{THshfostarr}
(Shapley--Folkman--Starr theorem),  to obtain
\[
\| x - z \|_{2} \leq L \sqrt{\min (n,m)}
=(\sqrt{n}/2 ) \sqrt{\min (n,m)} = \beta(n,m).
\]
Finally, if $x \in \overline{W} \cap \ZZ\sp{n}$,
we have 
$\ZZ \ni \| x - z \|_{\infty} \leq \| x - z \|_{2} \leq \beta(n,m)$,
from which 
$\| x - z \|_{\infty} \leq \lfloor \beta(n,m) \rfloor$.
Thus Theorem~\ref{THshapfolkL2IC} is proved.

\subsection{Proof of Lemma~\ref{LMptinunitcube}}
\label{SCproofunitcube}

In this section we prove  Lemma~\ref{LMptinunitcube},
which states that 
for any $x \in \overline{S}$,
there exists $v\sp{*} \in S$ satisfying
$ \| x - v\sp{*}  \|_{\infty} \leq 1 - 1/n$
in \eqref{ptinunitcube}.
Let $N = \{ 1,2,\ldots,n \}$.
Without loss of generality, 
we may assume
that $x_{i} \geq 1/2$ for all $i \in N$.
(If $I = \{ i \in N \mid x_{i} < 1/2 \}$ is nonempty, 
change $x_{i}$ to $1-x_{i}$ for all $i \in I$,
and change $S$ similarly.)  
Represent $x$ as a convex combination
of the points of $S$ as  
$x = \sum_{u \in S} \lambda_{u} u$,
where
$\sum_{u \in S} \lambda_{u} =1$ and 
$\lambda_{u} \geq 0$ $(u \in S)$.
We first note the following fact.

Claim 1:  
If $\lambda_{v} \geq 1/n$ for some $v \in S$, 
then $\| x - v  \|_{\infty} \leq 1 - 1/n$ for such $v$.

\noindent
(Proof of Claim 1) 
Since
\[
  x -v  = \sum_{u \in S} \lambda_{u} (u - v) = \sum_{u \ne v} \lambda_{u} (u - v),
\]
we obtain 
\begin{align*}
  \| x - v  \|_{\infty}  &= 
\max_{i \in N} 
 \left\{  \big| \sum_{u \ne v} \lambda_{u} (u_{i} - v_{i}) \big| \right\}
 \leq 
\max_{i \in N} 
  \left\{  \sum_{u \ne v} \lambda_{u} | u_{i} - v_{i} | \right\}
\\
 &\leq  \sum_{u \ne v} \lambda_{u} =  1  - \lambda_{v} \leq 1 - \frac{1}{n}.
\end{align*}
\hfill (End of proof of Claim 1)

\medskip

To prove \eqref{ptinunitcube} by contradiction,
we assume
\begin{equation} \label{ptUCmemo3}
 \| x - v  \|_{\infty} > 1 - \frac{1}{n}
\quad \mbox{for all $v \in S$}.
\end{equation}
We shall derive a contradiction as follows.
We first define a partition of $S$ into two subsets,
$S =  S_{1}\sp{0} \cup S_{1}\sp{1}$,
where $S_{1}\sp{1}$ is nonempty under \eqref{ptUCmemo3}.
Then 
$S_{1}\sp{1}$ is partitioned into $S_{2}\sp{0}$ and $S_{2}\sp{1}$,
where $S_{2}\sp{1}$ is nonempty under \eqref{ptUCmemo3}.
Continuing this way,
we obtain partitions of $S$ of the form
\begin{align*}
 S 
&=  S_{1}\sp{0} \cup S_{1}\sp{1} 
=  S_{1}\sp{0} \cup (S_{2}\sp{0} \cup S_{2}\sp{1})
\\
&=  S_{1}\sp{0} \cup S_{2}\sp{0} \cup (S_{3}\sp{0} \cup S_{3}\sp{1})
= \cdots =
\left( \bigcup_{j=1}\sp{n-1} S_{j}\sp{0} \right) \cup S_{n-1}\sp{1} ,
\end{align*}
where $S_{j-1}\sp{1} = S_{j}\sp{0} \cup S_{j}\sp{1}$ 
and $S_{j}\sp{1} \ne \emptyset$ for each 
$j=1,2,\ldots,n-1$
(with the convention of $S_{0}\sp{1} = S$).
At the final stage, we show that 
$S_{n-1}\sp{1} \ne \emptyset$ leads to a contradiction
to \eqref{ptUCmemo3}.

The first partition $S =  S_{1}\sp{0} \cup S_{1}\sp{1}$ is defined as follows.
By \eqref{ptUCmemo3}
there exists $i_{1} \in N$ and 
$u \in S$
satisfying 
$| x_{i_1} - u_{i_1}| > 1 - 1 / n$,
where $u_{i_{1}} =0$
 since $x_{i_1} \geq 1/2$ by our assumption.
Thus we have
\begin{equation} \label{ptUCmemo4}
  x_{i_1} > 1 - \frac{1}{n} .
\end{equation}
With reference to the component $i_{1}$, 
we classify the vectors in $S$ into two subsets:
\begin{equation} \label{ptUCmemo45}
 S_{1}\sp{0} = \{ v \in S \mid v_{i_1}=0 \},
\quad
 S_{1}\sp{1} = \{ v \in S \mid v_{i_1}=1 \}.
\end{equation}
Since
$x_{i_1} = \sum_{v \in S_{1}\sp{1}} \lambda_{v}$,
it follows from \eqref{ptUCmemo4} that
\begin{equation} \label{ptUCmemo6}
 \sum_{v \in S_{1}\sp{1}} \lambda_{v} > 1 - \frac{1}{n},
\qquad
 \sum_{v \in S_{1}\sp{0}} \lambda_{v} < \frac{1}{n} .
\end{equation}
In particular, $S_{1}\sp{1}  \ne \emptyset$.
It also follows from \eqref{ptUCmemo4} that
\begin{equation} \label{ptUCmemo5}
\mbox{For every $v \in S_{1}\sp{1}$: \quad}
 | x_{i_1} - v_{i_1} | = 1 - x_{i_1} < \frac{1}{n} \leq 1 - \frac{1}{n},
\end{equation}
where $n \geq 2$ is used.   Let $S_{0}\sp{1} = S$.

Claim 2: 
For $j=1,2,\ldots,n-1$, we can choose an index
$i_{j} \in N \setminus \{ i_{1}, i_{2},  \ldots, i_{j-1}  \}$
which defines a partition of $S_{j-1}\sp{1}$ into two parts 
\begin{equation} \label{ptUCmemo7}
 S_{j}\sp{0} = \{ v \in S_{j-1}\sp{1} \mid v_{i_{j}}=0 \},
\quad
 S_{j}\sp{1} = \{ v \in S_{j-1}\sp{1} \mid v_{i_{j}}=1 \}
\end{equation}
such that
\begin{align}
&  x_{i_{j}} > 1 - \frac{1}{n} ,
\label{ptUCmemo8} \\
& 
\mbox{For every $v \in S_{j}\sp{1}$: \quad}
| x_{i_{j}} - v_{i_{j}} | = 1 - x_{i_{j}}  \leq 1 - \frac{1}{n} ,
\label{ptUCmemo9} \\
&
 \sum_{v \in S_{j}\sp{1}} \lambda_{v}
 > 1 - \frac{j}{n},
\qquad
 \sum_{v \in S_{j}\sp{0}} \lambda_{v} < \frac{1}{n} .
\label{ptUCmemo10} 
\end{align}

\noindent
(Proof of Claim 2) 
For $j=1$ we have
\eqref{ptUCmemo7}--\eqref{ptUCmemo10} 
from \eqref{ptUCmemo4}--\eqref{ptUCmemo5}. 
Assuming we have chosen $i_{1}, i_{2}, \ldots, i_{j}$
(where $j < n-1$) 
satisfying  \eqref{ptUCmemo7}--\eqref{ptUCmemo10},
we choose the next index $i_{j+1}$ as follows.
For each $v \in S_{j}\sp{1}$ we have 
$| x_{i_k} - v_{i_k} | \leq 1 - 1/n$
for $k=1,2,\ldots,j$ by \eqref{ptUCmemo9}
while
$\| x - v  \|_{\infty} > 1 - 1/n$
by \eqref{ptUCmemo3}.
Hence there exists $i_{j+1} \in N \setminus \{ i_{1}, i_{2},  \ldots, i_{j}  \}$
and $u \in S_{j}\sp{1}$ satisfying 
$| x_{i_{j+1}} - u_{i_{j+1}} | > 1 - 1 / n$,
where $u_{i_{j+1}} =0$
since $x_{i_{j+1}} \geq 1/2$ by our assumption.
Thus we obtain
\begin{equation} \label{ptUCmemo11}
  x_{i_{j+1}} > 1 - \frac{1}{n} ,
\end{equation}
which is \eqref{ptUCmemo8} for $j+1$.
With the use of this $i_{j+1}$ we define 
a partition
$S_{j}\sp{1} = S_{j+1}\sp{0} \cup S_{j+1}\sp{1}$
by \eqref{ptUCmemo7} for $j+1$.
Then
$S = (S_{1}\sp{0} \cup \cdots \cup S_{j}\sp{0}) 
     \cup (S_{j+1}\sp{0} \cup S_{j+1}\sp{1})$
and
\begin{align}
1 - \frac{1}{n} < x_{i_{j+1}} 
& =  \sum_{v \in S_{j+1}\sp{1}} \lambda_{v}
 + \sum_{k=1}\sp{j} \sum_{v \in S_{k}\sp{0}} \lambda_{v} v_{i_{j+1}}
\nonumber \\ &
 \leq \sum_{v \in S_{j+1}\sp{1}} \lambda_{v}
 + \sum_{k=1}\sp{j} \sum_{v \in S_{k}\sp{0}} \lambda_{v} 
 \label{ptUCmemo11B}
\\ &
= 1 -  \sum_{v \in S_{j+1}\sp{0}} \lambda_{v} .
 \label{ptUCmemo11C}
\end{align}
The second inequality of \eqref{ptUCmemo10} for $j+1$ follows
from \eqref{ptUCmemo11C}.
In \eqref{ptUCmemo11B} we have
$\sum_{v \in S_{k}\sp{0}} \lambda_{v} \leq 1/n$
for $k=1,2,\ldots,j$
by the second inequality of \eqref{ptUCmemo10},
and therefore,
\[
1 - \frac{1}{n}  < \sum_{v \in S_{j+1}\sp{1}} \lambda_{v} + \frac{j}{n}.
\]
Thus we obtain
\[
\sum_{v \in S_{j+1}\sp{1}} \lambda_{v} > 1 - \frac{j+1}{n},
\]
which is the first inequality of \eqref{ptUCmemo10} for $j+1$.
For every $v \in S_{j+1}\sp{1}$
we have \eqref{ptUCmemo11} and $v_{i_{j+1}}=1$, from which we obtain
\[
| x_{i_{j+1}} - v_{i_{j+1}} | = 1 - x_{i_{j+1}} < \frac{1}{n} \leq 1 - \frac{1}{n} ,
\]
showing \eqref{ptUCmemo9} for $j+1$.
\hfill (End of proof of Claim 2) 

\medskip

By \eqref{ptUCmemo10} for $j=n-1$, we have
$S_{n-1}\sp{1} \ne \emptyset$.
Since $S_{n-1}\sp{1} \subseteq S_{j}\sp{1}$
for all $j \leq n-1$, 
any $v \in S_{n-1}\sp{1}$ has the property that
$v_{i_{k}} = 1$ for $k=1,2,\ldots,n-1$,
and $v_{i_{n}} \in \{ 0,1 \}$.
If $S_{n-1}\sp{1}$ contains 
$v\sp{*} = (1,1, \ldots, 1)$, this vector satisfies
$\| x - v\sp{*}  \|_{\infty} \leq 1 - 1/n$,
since
\[
 | x_{i_{j}} - v_{i_{j}}\sp{*} | = 1 - x_{i_{j}} \leq 1 - \frac{1}{n} 
\qquad (j=1,2,\ldots,n-1)
\]
by \eqref{ptUCmemo9} and  
\[
 | x_{i_{n}} - v_{i_{n}}\sp{*} | = 1 - x_{i_{n}}  \leq \frac{1}{2} \leq  1 - \frac{1}{n}.
\]
This contradicts \eqref{ptUCmemo3}.
Otherwise, 
$S_{n-1}\sp{1}$ consists of a unique element 
$u\sp{*}$ with $u_{i_{n}}\sp{*}=0$
and $u_{i}\sp{*}=1$ for $i \ne i_{n}$.
By the first inequality of \eqref{ptUCmemo10} for $j=n-1$ we have
$\lambda_{u\sp{*}} > 1 - (n-1)/n = 1/n$,
which, by Claim 1, implies
$\| x - u\sp{*}  \|_{\infty} \leq 1 - 1/n$,
which is also a contradiction to \eqref{ptUCmemo3}.
The proof of 
Lemma~\ref{LMptinunitcube} is thus completed.

\subsection{Proof of Proposition~\ref{PRmnthreshV2}}
\label{SCproofalphabeta}

In this section we prove Proposition~\ref{PRmnthreshV2}
to determine which is smaller between $\alpha(n,m)$ and $\beta(n,m)$.

(1) 
When $n=2$ and $m \geq 2$, we have
\[
\alpha(2,m) =  \left( 1-\frac{1}{2}\right) \min (2,m) = 1,
\quad
\beta(2,m) =  \frac{1}{2} \sqrt{ 2 \cdot \min (2,m) }=1.
\]

(2)
When  $m=1$ and $n \geq 2$, we have
\[
\alpha(n,1) =  \left( 1-\frac{1}{n}\right) \min (n,1) =   1-\frac{1}{n},
\quad
\beta(n,1) =  \frac{1}{2} \sqrt{ n \cdot \min (n,1) } =  \frac{1}{2} \sqrt{ n }.
\]
When  
$n=2$, we have $\alpha(2,1) = 1/2$, \ $\beta(2,1) = \sqrt{ 2 }/2 = 0.7...$,
and hence
$\alpha(2,1) < \beta(2,1)$.
When  
$n=3$, we have $\alpha(3,1) = 2/3$, \ $\beta(3,1) = \sqrt{ 3 }/2 = 0.86...$,
and hence
$\alpha(3,1) < \beta(3,1)$.
When  
$n \geq 4$,
we have
$\alpha(n,1) < 1$, \  $\beta(n,1) =  \frac{1}{2} \sqrt{ n } \geq 1$,
and hence
$\alpha(n,1) < \beta(n,1)$.

(3)
The claim is concerned with the cases with $m \geq 2$ and $n \geq 3$.
The combination of Case A and Case B below covers all such cases.

Case A: When $n \geq 3$ and $n \leq m$, we have
\[
\alpha(n,m) =  \left( 1-\frac{1}{n}\right) n = n-1 ,
\quad
\beta(n,m) =  \frac{1}{2} \sqrt{ n \cdot n } = \frac{n}{2}.
\]
Therefore, $\alpha(n,m) > \beta(n,m)$.

Case B: When $n \geq 3$, \ $m \geq 2$, and $m < n$, we have
\[
\alpha(n,m) =  \left( 1-\frac{1}{n}\right) m ,
\qquad
\beta(n,m) =  \frac{1}{2} \sqrt{ n \cdot m } .
\]
Therefore, we have
\begin{align}
& 
\alpha < \beta
\iffS
  \left( 1-\frac{1}{n}\right) m \
 <  \frac{1}{2} \sqrt{ n \cdot m } 
\iffS
\sqrt{ m } < \frac{\sqrt{ n }}{2}  \frac{1}{ 1- 1/n } 
\iffS
 m  < \frac{n\sp{3}}{ 4 (n-1)\sp{2} } .
\end{align}
Define
\begin{equation} \label{thetadef}
 \theta(n)= \frac{n\sp{3}}{ 4 (n-1)\sp{2} } .
\end{equation}
Since 
$\theta(n)$ is not an integer for any integer $n \geq 3$,
we have that
$\alpha \ne \beta$ for all $(n,m)$, and that
\begin{equation} \label{alphabetatheta}
\alpha < \beta \iffS m  < \theta(n),
\qquad
\alpha > \beta \iffS m  > \theta(n) .
\end{equation}

Case B-1:
When $n=3$, we have $\theta(3) = 27/16 = 1.6875$, and hence
$\alpha(3,2) > \beta(3,2)$
by \eqref{alphabetatheta}.
Note that $\{ m \in \ZZ \mid m \geq 2,  m < n \}$ consists of $m=2$ only.

Case B-2:
When $n=4$,
we have 
$\theta(4) = 16/9 = 1.77...$,
and hence
$\alpha(4,m) > \beta(4,m)$ for $m=2,3$.
Note that
$\{ m \in \ZZ \mid m \geq 2,  m < n \}$
consists of $m=2,3$ only.

Case B-3:
When $n \geq 5$, the threshold value
$\theta(n)$
can be estimated as
\begin{equation} \label{threshest3}
 \frac{ n+2 }{4} <  \frac{n\sp{3}}{ 4 (n-1)\sp{2} } 
 < \frac{ n+3 }{4} 
\qquad(n \geq 5) .
\end{equation}
Indeed, the first inequality of \eqref{threshest3} holds since
\[
 \frac{ n+2 }{4} < \frac{n\sp{3}}{ 4 (n-1)\sp{2} }
\iffS (n+2)(n-1)\sp{2} < n\sp{3} \iffS 3n > 2 ,
\]
and the second inequality of \eqref{threshest3} follows from
\[
\frac{n\sp{3}}{ 4 (n-1)\sp{2} } < \frac{ n+3 }{4}
\iffS  n\sp{3} < (n+3)(n-1)\sp{2}   
\iffS n\sp{2}-5n + 3 > 0
\]
and
$n\sp{2}-5n + 3  = n(n-5) + 3 > 0$.
It follows from 
\eqref{alphabetatheta} and  \eqref{threshest3}
that
\begin{align*} 
& 
\alpha < \beta  \quad \mbox{if} \quad n \geq 5, 2 \leq m \leq (n+2)/4,
\\ &
\alpha > \beta \quad \mbox{if}  \quad n \geq 5, (n+3)/4 \leq  m < n ,
\end{align*}
or equivalently,
\begin{align*} 
& 
\alpha < \beta  \quad \mbox{if} \quad  n \geq 5, 2 \leq m, n \geq 4m-2,
\\ &
\alpha > \beta \quad \mbox{if}  \quad  n \geq 5, 2 \leq m < n \leq 4m-3.
\end{align*}

This completes the proof of Proposition~\ref{PRmnthreshV2}.

\bigskip

\noindent {\bf Acknowledgement}. 
The authors thank Michihiro Kandori for a stimulating question
that triggered this study.
This work was supported by JSPS/MEXT KAKENHI JP23K11001 and JP21H04979,
and by JST ERATO Grant Number JPMJER2301, Japan.
Theorem~\ref{THshapfolkL} 
was obtained from discussion at an ERATO meeting.





\end{document}